\newcommand{\norm}[1]{\left\lVert#1\right\rVert}
\newtheorem{theorem}{Theorem}[section]
\newtheorem{lemma}{Lemma}[section]
\newtheorem{proposition}{Proposition}[section]
\newtheorem{remark}{Remark}[section]
\newtheorem{assumption}{Assumption}[section]
\newenvironment{proof}[1][Proof]{\textbf{#1.} }{\hfill$\Box$}
\newcommand*{\centerfloat}{%
  \parindent \z@
  \leftskip \z@ \@plus 1fil \@minus \textwidth
  \rightskip\leftskip
  \parfillskip \z@skip}
\newcommand{\ab}[1]{{\color{blue}#1}}
\newcommand{\FLE}{\texttt{Full-Low Evaluation}}
\newcommand{\DFOTR}{\texttt{DFO-TR}}
\newcommand{\FullEval}{\texttt{Full-Eval}}
\newcommand{\LowEval}{\texttt{Low-Eval}}
\newcommand{\FDLM}{\texttt{FDLM}}
\newcommand{\NOMAD}{\texttt{NOMAD}}
\newcommand\cuter{{\sf CUTEst}}
\numberwithin{equation}{section}
\definecolor{forestgreen}{rgb}{0.13, 0.55, 0.13}
\definecolor{greenp}{rgb}{0.0, 0.65, 0.31}
\definecolor{greenr}{rgb}{0.4, 0.69, 0.2}
\definecolor{indiagreen}{rgb}{0.07, 0.53, 0.03}
\begin{document}

\title{Full-low evaluation methods for derivative-free optimization}

\author{
A. S. Berahas\thanks{Department of Industrial \& Operations Engineering,
University of Michigan,
 1205 Beal Avenue, Ann Arbor, MI 48109-2102, USA
({\tt albertberahas@gmail.com}).}
\and
O. Sohab\thanks{Department of Industrial and Systems Engineering,
Lehigh University,
200 West Packer Avenue, Bethlehem, PA 18015-1582, USA
({\tt ous219@lehigh.edu}).}
\and
L. N. Vicente\thanks{Department of Industrial and Systems Engineering,
Lehigh University,
200 West Packer Avenue, Bethlehem, PA 18015-1582, USA. Support for this
author was partially provided by the Centre for Mathematics
of the University of Coimbra under grant FCT/MCTES
UIDB/MAT/00324/2020.}
}

\maketitle
\footnotesep=0.4cm
{\small
\begin{abstract}
We propose a new class of rigorous methods for derivative-free optimization with the aim of delivering efficient and robust numerical performance for functions of all types, from smooth to non-smooth, and under different noise regimes. To this end, we have developed a class
of methods, called \FLE{} methods, organized around two main types of iterations.

The first iteration type (called \FullEval{}) is expensive in function evaluations, but exhibits good performance in the smooth and non-noisy cases. For the theory, we consider a line search based on an approximate gradient, backtracking until a sufficient decrease condition is satisfied. In practice, the gradient was approximated via finite differences, and the direction was calculated by a quasi-Newton step (BFGS). The second iteration type (called \LowEval{}) is cheap in function evaluations, yet more robust in the presence of noise or non-smoothness. For the theory, we consider direct search, and in practice we use probabilistic direct search with one random direction and its negative.

A switch condition from \FullEval{} to \LowEval{} iterations was developed based on the values of the line-search and direct-search stepsizes. If enough \FullEval{} steps are taken, we derive a complexity result of gradient-descent type. Under failure of \FullEval{}, the \LowEval{} iterations become the drivers of convergence yielding non-smooth convergence. \FLE{} methods are shown to be efficient and robust in practice across problems with different levels of smoothness and noise.
\end{abstract}
}

\section{Introduction}
\label{sec:intro}

Derivative-Free Optimization (DFO) methods~\cite{CAudet_WHare_2017,ARConn_KScheinberg_LNVicente_2009b,ALCustodio_KScheinberg_LNVicente_2017,JLarson_MMenickelly_SWild_2019,LMRios_NVSahinidis_2013} are developed for the minimization of functions whose corresponding derivatives are unavailable for use or expensive to compute or approximate. The value of the functions is often computed via numerical simulations and may be subject to statistical noise or other forms of inaccuracy. Constraints may be part of the problem formulation and their derivatives may also be unavailable. DFO methods have applications in all fields of engineering and applied science (see the recent survey~\cite{SAlarie_etal_2022}), in particular whenever data is fitted with the purpose of finding optimal values for design or control variables of complex models or systems, or when there is the need to determine optimal parameters of computational solvers or to tune hyperparameters in artificial intelligence.

In this paper, we consider the following unconstrained problem
\begin{align}
    \min_{x \in \mathbb{R}^n} \; f(x),
\end{align}
where $f: \mathbb{R}^n \to \mathbb{R}$ is assumed to have different properties throughout the paper.
We will consider both the cases where~$f$ is smooth (assuming that it has a Lipschitz continuous gradient) and is possibly non-smooth (assuming only local Lipschitz continuity of~$f$).
The methods discussed and developed in this paper find application on problems where derivative information of whatever form is not available or is too expensive to compute\ab{~\cite{SAlarie_etal_2022}}.
Although we do not consider the noisy case explicitly in our theory, we do take it into account when developing and numerically testing the methods.

\subsection{Advantages and disadvantages of the current DFO methods} \label{sec:adv}

When designing numerical algorithms for DFO~\cite{ARConn_KScheinberg_LNVicente_2009b}, two main algorithmic paradigms are often used, in part depending on how~$f$ is sampled, leading to algorithms which are either \textit{directional} or \textit{model-based}.

Directional algorithms are based on the concept of a displacement along a direction. Among those, we find directional direct-search methods~\cite{CAudet_JEDennis_2006,AJBooker_etal_1998,SGratton_et_al_2015,TGKolda_RMLewis_VTorczon_2003,VTorczon_1997}, which rely only on function evaluations without any implicit or explicit approximation of the gradient, or construction of a model.
At each iteration, a finite set of directions is first generated, from which a set of polling points is considered by adding to the current iterate these
directions multiplied by a certain stepsize. The objective function is then evaluated at all, or some, of these points depending on the polling type (opportunistic or complete), in order to search for a new point satisfying a (possibly sufficient) decrease condition. The iterate and stepsize are updated according to the outcome of the polling step. A search step~\cite{AJBooker_etal_1998} can be taken to improve numerical performance, with no influence on the convergence analysis.
A relevant feature of these methods is their ability to converge when the function is non-smooth. In fact, if the normalized directions are asymptotically dense in the unit sphere, the iterates converge to a Clarke stationary point~\cite{CAudet_JEDennis_2006,LNVicente_ALCustodio_2012}\footnote{Density of search directions is not the single technique that leads to convergence results in non-smooth~DFO. Other major approaches include the use of convex hulls of (possibly randomly) sampled approximate gradients~\cite{AMBagirov_BKarasozen_MSezer_2006,WHare_JNutini_2013,KCKiwiel_2010} or trust-region models~\cite{GLiuzzi_etal_2019} with randomized linear terms.}.

The directional direct-search framework has two main variants, a deterministic one~\cite{TGKolda_RMLewis_VTorczon_2003,VTorczon_1997} and a probabilistic one~\cite{SGratton_et_al_2015}. In the deterministic variant, the directions belong to positive spanning sets (PSS), which are sets of vectors that span the whole space with non-negative coefficients. When the objective function is smooth, at least one of these directions is a descent one~\cite{TGKolda_RMLewis_VTorczon_2003}.
However, the cardinal number of a PSS is at least $n+1$, where $n$ is the dimension of the problem, and one iteration may cost $\mathcal{O}(n)$ function evaluations.
The probabilistic variant consists of using randomly generated polling directions that are \textit{probabilistically descent}. Such property relies on the existence of a direction that makes an acute angle with the negative gradient with a sufficiently large probability, conditioned on the history of iterations~\cite{SGratton_et_al_2015}.
An example of such a set is a uniformly drawn direction on the unit ball and its negative. In this case, the cost of an iteration is at most~$2$ function evaluations, leading to significant gains in efficiency~\cite{SGratton_et_al_2015}.
In summary, directional direct-search methods are slow when using deterministic directions (although faster when randomized), but robust and parallelizable.
Moreover, these methods work for non-smooth and possibly noisy functions.

Among directional methods for DFO, one also finds those based on line-search schemes. In~\cite{JKiefer_JWolfowitz_1952,JCSpal_1992}, the authors considered fixed predetermined stepsize strategies coupled with directions computed via (possibly randomized) finite differences (FD) approximations to the gradient of the objective function. Methods with adaptive stepsize choices have also been proposed~\cite{ASBerahas_RHByrd_JNocedal_2019,ASBerahas_LCao_KScheinberg_2021,DMBortz_CTKelley_1998}. These methods utilize either a simplex gradient approximation or a FD approximation to the gradient of the objective function (leading to~$\mathcal{O}(n)$ evaluations per iteration), and enhance the search direction by applying a quasi-Newton scheme. Some form of sufficient decrease condition is imposed to restrict the size of the step, and possibly a curvature condition to avoid short steps. In the noisy setting, approximating the gradient can be problematic. To mitigate this issue, regression techniques can be used to compute the gradient approximation~\cite{DMBortz_CTKelley_1998}. Alternatively, optimal (in terms of minimizing the approximation error) FD~approximations can be computed if the noise level is known, or can be estimated~\cite{JJMore_SMWild_2011}, and incorporated in a FD~scheme~\cite{JJMore_SMWild_2012}. The authors in~\cite{ASBerahas_RHByrd_JNocedal_2019} propose a FD quasi-Newton method with explicit noise estimation~\cite{JJMore_SMWild_2011} and a relaxed sufficient decrease condition that is robust in the noisy setting, and that avoids re-estimating the noise at every iteration. These methods are moderately efficient and potentially scalable in the smooth case. However, when the objective function under consideration is non-smooth, such line-search methods are no longer suitable.

The other most commonly followed paradigm in the development of DFO algorithms relies on building models using objective function samples for use in trust-region methods~\cite{ARConn_PhLToint_1996,ARConn_KScheinberg_LNVicente_2009,GFasano_JLMorales_JNocedal_2009,MJDPowell_2008,DWinfield_1973}. Models can be built via interpolation or regression techniques, using basis functions such as quadratic polynomials or radial basis functions. Sample points can result from trust-region steps, be computed from well-designed deterministic model-improving techniques, or obtained from random sampling.
At each iteration of a trust-region method~\cite{ARConn_NIMGould_PhLToint_2000},
one typically considers the minimization of a quadratic model in a region around the current iterate, from which a trial point is obtained.
Accepting the trial point and updating the trust-region radius are based on the ratio between the decrease achieved in the function and the one attained at the model.
The model serves as a local approximation of the function curvature, and
in the DFO case the quality of the model depends on the geometry of the sample points~\cite{ARConn_KScheinberg_LNVicente_2008}.
DFO trust-region methods are efficient and robust when the dimension of the problems is small. However, these methods do not scale well, not only because the overall computing time becomes an issue due to the dense linear algebra of the interpolation, but also due to the ill conditioning (poor geometry) of the sample sets.
Moreover, these methods are mainly designed for smooth functions and are not easily parallelizable.
Trust-region methods for DFO have been extended to handle noise in the objective function~\cite{SCBillups_JLarson_PGraf_2013,GDeng_MCFerris_2006} (see~\cite{JLarson_MMenickelly_SWild_2019} for a review of recent developments on stochastic functions).

There have been approaches in the literature hybridizing model-based and direct-search methods. While numerical results have been promising, no comprehensive convergence theory has been presented (other than the use of the optional search step~\cite{AJBooker_etal_1998} in direct search). Pioneer examples include~\cite{ALCustodio_HRocha_LNVicente_2010,ALCustodio_LNVicente_2004,MADiniz-Ehrhardt_JMMartinez_MRaydan_2006}
and more recent ones are~\cite{CAudet_etal_2018,ARConn_SLeDigabel_2013}, and the reader is referred to~\cite[Section~2.3]{JLarson_MMenickelly_SWild_2019} for a summary of this topic.

\subsection{A rigorous DFO method that is robust for all function types}

As we have seen in Section~\ref{sec:adv}, most of the existing DFO methods have been designed and tailored for a specific type of problems, and one has to carefully choose the class to use for the best results. In this paper, we introduce a new class of derivative-free optimization methods called \FLE{}, taking advantage of two types of iterations, with the goal of achieving sustainedly good performance across all possible function types. A first iteration type (\FullEval{}) is \textit{expensive} in function evaluations, but exhibits good performance in the smooth, non-noisy case. A second iteration type (\LowEval) is \textit{cheap} in terms of function evaluations and more appropriate in the presence of non-smoothness or/and noise.

In its general form, the \FullEval{} iteration consists of a line-search step based on an approximate gradient, and the \LowEval{} iteration consists of a direct-search step. The integration of the two iterations is done by switching from one to the other when it is deemed beneficial or necessary. The main switch is a form of detection of non-smoothness or noise in~$f$ during \FullEval{} iterations.
The new class of methods is shown to be globally convergent with appropriate rates in the smooth case, whenever \FullEval{} generates enough iterates.
It is also shown to be globally convergent in the non-smooth case (and this is assured by \LowEval{} when \FullEval{} fails to bring the approximate gradient close to zero). Failure of \FullEval{} is detected by the activation of the switch condition which compares the line-search and direct-search stepsizes. To our knowledge such type of rigorous results is novel in DFO.

Our practical implementation of \FLE{} considers a BFGS step based on a FD gradient for \FullEval{}, and a probabilistic direct-search step
based on a random vector and its negative for \LowEval{} (see Section \ref{sec:practical_implementation} for more details). The numerical results show that this practical version is robust and relatively efficient for all function types of varied smoothness, and noise origin and level.

Some authors often refer to a surrogate as a low-precision
model, and to the true problem as a full precision one. In the context of our paper, Low and
Full refer to the deployed algorithmic effort, not the quality of the evaluations and thus not related to the surrogate framework from~\cite{AJBooker_etal_1998}.

\subsection{Structure of the paper and notation}

The paper is organized in the following way. In Section~\ref{sec:methods}, we start by giving the general algorithmic description of the \FLE{} framework. We then present, in Section~\ref{sec:convergence}, the global convergence rate analysis of the method when applied to a smooth objective function. Specifically, we show that there exists a subsequence of iterates for which the gradients decay to $0$ at a sublinear rate in the non-convex case, and at a linear one in the strongly convex case. We also derive in Section~\ref{sec:convergence} a global convergence result for the non-smooth case. In Section~\ref{sec:setup}, we introduce our practical choices for the \FullEval{} and \LowEval{} iterations, as well as the other solvers used in our testing environment. Finally, the performance of \FLE{} is reported in Section~\ref{sec:numerical}, based on tests conducted on different classes of functions (smooth, non-smooth, and noisy). The paper is concluded with some remarks in Section~\ref{sec:conclusions}.

Let the set of indices corresponding to successful \FullEval{} and \LowEval{} iterations be denoted by $\mathcal{I}_{SF}$ and $\mathcal{I}_{SL}$, respectively. Similarly, the indices of unsuccessful iterations are denoted as $\mathcal{I}_{UF}$ and $\mathcal{I}_{UL}$, corresponding to the \FullEval{} and \LowEval{} iterations, respectively. The set of all iterations is denoted by $\mathcal{I}_{SF} \cup \mathcal{I}_{SL} \cup \mathcal{I}_{UF} \cup \mathcal{I}_{UL}$. All norms in this paper are Euclidian.


\section{Full-low evaluation methods}
\label{sec:methods}

The main idea of the \FLE{} methods is based on the combination of two types of steps.
The first type is \textit{expensive} in function evaluations (\FullEval), but exhibits good performance in the smooth, non-noisy case. The second type is \textit{cheaper} in function evaluations (\LowEval) and at the same time more appropriate in the presence of noise and/or non-smoothness.
For such two goals, we have in mind some form of model-based or enhanced gradient-descent step for \FullEval{}, capable of exploring curvature, and a randomized directional procedure for \LowEval{}, less sensitive to noise and qualified for delivering convergence in the non-smooth regime.
The general mechanism of the \FLE{} approach is described in Algorithm~\ref{alg:flalg}.
\FullEval{} steps are consecutively taken until a certain condition, designed to sense the presence of non-smoothnes and/or noise, is activated, after which one switches to \LowEval{} iterations. The number of consecutive \LowEval{} iterations is a user defined parameter, and can be selected as a function of the last successful \FullEval{} iteration (see Section \ref{sec:practical_implementation} for details of our practical implementation of the \FLE{} method).

\begin{algorithm}
 {
\caption{\FLE{} Algorithm}
  \label{alg:flalg}
  \textbf{Initialization:} Choose an initial iterate $x_0$. Set iteration $t_0 = \FullEval{}$.
  \begin{algorithmic}[1]
    \State \textbf{For} $k=0,1,2,\dots$
    \State \hspace{0.25cm} {\bf If} $t_k = \FullEval{}$,
    attempt to compute a $\FullEval{}$ step.
\State  \hspace{0.80cm} {\bf If} success, update $x_{k+1}$ and set $t_{k+1} = \FullEval{}$.
 \State   \hspace{0.80cm} {\bf Else},
    $x_{k+1}=x_k$ and $t_{k+1} = \LowEval{}$.
    \State \hspace{0.25cm} {\bf If} $t_k = \LowEval{}$,
    compute a $\LowEval{}$ step.
\State
    \hspace{0.80cm} Update $x_{k+1}$. Decide on $t_{k+1} \in \{ \LowEval{},  \FullEval{} \}$.
  \end{algorithmic}
  }
\end{algorithm}

In the general setting, the instance we consider for the \FullEval{} step is of line-search nature, where the search direction $p_k$ is calculated based on an approximate gradient~$g_k$. The conditions imposed on both $p_k$ and $g_k$ are stated in Section~\ref{sec:convergence}. If the \FullEval{} step is successful, the next iterate is necessarily of the form $x_k + \beta_k p_k$, where~$\beta_k$ is a positive stepsize.
As is typical in nonlinear optimization, the stepsize~$\beta_k$ is required to satisfy a sufficient decrease condition of the form
\begin{equation}
\label{eq:sdc1}
f(x_k + \beta p_k) \; \leq \; f(x_k) + c \, \beta g_k ^\top p_k,
\end{equation}
where $c \in (0,1)$ is independent of~$k$ \cite{JNocedal_SJWright_2006}. Under appropriate assumptions, condition~(\ref{eq:sdc1}) can be ensured by backtracking from a fixed stepsize until it is satisfied. By doing so, one ensures steps that are simultaneously  of restricted size and not too small.

A key modification we introduce in the \FullEval{} iteration is that we do not allow the stepsize $\beta_k$ to become too small compared to a certain function of~$\alpha_k$, the direct-search stepsize used in \LowEval{} iterations (described below). In fact we only consider a \FullEval{} iteration successful if $\beta_k$ satisfies
\begin{equation}
\label{eq:key}
\beta \; \geq \; \gamma \rho(\alpha_k),
\end{equation}
where $\gamma > 0$ is independent of $k$ and $\rho(\cdot)$ is the forcing function used in the direct-search scheme of the \LowEval{} iteration.
If we backtrack too much so that we violate~(\ref{eq:key}), the \FullEval{} step is skipped. The \FullEval{} iteration is described in Algorithm~\ref{alg:fbfgsalg}.

\begin{algorithm}
 {
\caption{\FullEval{} Iteration: Line Search}
  \label{alg:fbfgsalg}

  \textbf{Input}: Iterate $x_k$ and \LowEval{} stepsize $\alpha_k$. Backtracking parameters $\bar{\beta}>0$ and $\tau \in (0,1)$.

 \textbf{Output}: $t_{k+1}$, $x_{k+1}$, and $\alpha_{k+1}$.

  \begin{algorithmic}[1]
  \State Compute an approximate gradient $g_k$.
  \State Compute a direction $p_k$.
  \State Backtracking line-search: Set $\beta=\bar{\beta}$. \textbf{If}~(\ref{eq:key}) is false, \textbf{stop}.
  \State  \textbf{While} (\ref{eq:sdc1}) is false
  \State \hspace{0.25cm} Set $\beta = \tau \beta$.
  \State \hspace{0.25cm} \textbf{If}~(\ref{eq:key}) is false, set
     $x_{k+1}=x_k$ and $t_{k+1} = \LowEval{}$, and stop the \textbf{While} cycle.
     \State {\bf If} (\ref{eq:key}) is true,  set $\beta_k = \beta$, $x_{k+1}=x_k + \beta_k p_k$, and $t_{k+1} = \FullEval{}$.
     \Statex (Note, the $\LowEval{}$ parameter $\alpha_k$ remains unchanged, $\alpha_{k+1} = \alpha_k$; see Algorithm \ref{alg:pds}.)

  \end{algorithmic}
  }
\end{algorithm}

Our proposed \LowEval{} iteration is described in Algorithm~\ref{alg:pds}, and consists of applying one step of direct search. The set of polling directions $D_k$ is for the moment left unspecified. Similar to a line-search, at each iteration, the stepsize $\alpha_k$ is required to satisfy a sufficient decrease condition of the form
\begin{equation}
\label{eq:ds-sdc}
f(x_k + \alpha_k d_k) \; \leq \;  f(x_k) - \rho(\alpha_k),
\end{equation}
where $\rho(\cdot)$ is a (positive) forcing function satisfying the conditions given in Section~\ref{sec:convergence}.
If the iteration is successful the stepsize is kept constant or increased, otherwise it is decreased. The factors for stepsize increase/decrease must obey the theory, which is met for instance if they are independent of~$k$. An initial value of $\alpha_0 > 0$ must be supplied to the first \LowEval{} iteration.

\begin{algorithm}[H]
 {
\caption{\LowEval{} Iteration: Direct Search}
  \label{alg:pds}

   \textbf{Input}: Iterate $x_k$ and stepsize $\alpha_k$.
  Direct-search parameters $\lambda \geq 1$ and $\theta \in (0,1)$.

   \textbf{Output}: $t_{k+1}$, $x_{k+1}$, and $\alpha_{k+1}$.

  \begin{algorithmic}[1]
  \State Generate a finite set $D_k$ of non-zero polling directions.
  \State \textbf{If} (\ref{eq:ds-sdc}) is true for some $d_k\in D_k$, set $x_{k+1} = x_k + \alpha_k d_k$ and $\alpha_{k+1} = \lambda \alpha_k$.
  \State \textbf{Else}, set $x_{k+1} =  x_k$ and $\alpha_{k+1} = \theta \alpha_k$.
  \State Decide \textbf{if} $t_{k+1}=\LowEval$ \textbf{or if} $t_{k+1}=\FullEval$.
  \end{algorithmic}
  }
\end{algorithm}

In order to switch from \LowEval{} to \FullEval{}, one can compare the number of unsuccessful consecutive \LowEval{} iterations ($nu_k$) to the number of line-search backtracks ($nb_{j_k}$) done in the previous \FullEval{} iteration ($j_k$). When $nu_k$ becomes greater than $nb_{j_k}$, that is perhaps a sign that too much effort was put in the current \LowEval{} iteration, a switch is desirable. We will return to this in Section~\ref{sec:setup}. For the purpose of the convergence theory, we will assume an infinity of \FullEval{} or \LowEval{} iterations whenever necessary.

\section{Convergence and rates of convergence of full-low evaluation methods}
\label{sec:convergence}

Convergence analysis for the \FLE{} methods is presented for both the smooth and non-smooth cases. The noisy case will be addressed within the non-smooth analysis; see Section \ref{sec:convergence_nonsmooth}.


\subsection{Convergence rates in the smooth case}

In this section, we analyze the behavior of the class of \FLE{} methods in the smooth case. We show that if the \FullEval{} step generates an infinity of iterates, the convergence and rates of convergence guaranteed match those of deterministic gradient descent. 
We now introduce the assumptions needed for the analysis, starting by the smoothness of~$f$.

\begin{assumption}\label{ass:lip}
The function $f$ is continuously differentiable and its gradient $\nabla f$ is Lipschitz continuous with constant $L>0$.
\end{assumption}

The approximate gradient used in \FullEval{} iterations is required to satisfy the following assumption.

\begin{assumption}
\label{ass:grfd}
The approximate gradient $g_k$ computed at $x_k$ satisfies
\begin{equation} \label{eq:gk}
    \| \nabla f(x_k) -  g_k \| \; \le \; u_g \beta_k \| g_k \|,
    \end{equation}
where $ u_g > 0$ is independent of~$k$.
\end{assumption}

When approximating the gradient by finite differences, Assumption~\ref{ass:grfd} can be rigorously ensured by Algorithm~\ref{alg:fastsr1} (see Proposition~\ref{prop:crit}).
Note that if $u_g \beta_k < 1$, Assumption~\ref{ass:grfd} implies that the negative gradient approximation~$-g_k$ is a descent direction:
\[
-\nabla f(x_k)^\top g_k \; = \; -g_k^\top g_k + (g_k-\nabla f(x_k))^\top g_k \; \geq \;  \|g_k\|^2 -
u_g \beta_k \|g_k\|^2 \; > \; 0.
\]
Finally, we state the assumptions on the \FullEval{} directions.

\begin{assumption}\label{ass:pk}
There exist constants $\kappa, u_p > 0$ such that
\begin{equation} \label{eq:cos}
 \cos(-g_k, p_k) \; = \; \frac{(-g_k) ^\top p_k}{\|g_k\| \|p_k\|} \; > \; \kappa
\end{equation}
and
\begin{equation} \label{eq:grad}
\|g_k\| \; \leq \; u_p \norm{p_k}.
\end{equation}
\end{assumption}

Condition~(\ref{eq:cos}) is classical in line-search methods~\cite[Section~3.2]{JNocedal_SJWright_2006}, and imposes an acute angle between the direction and the approximate negative gradient, bounded away from ninety degrees. Note that if $u_g \beta_k < \kappa$, conditions~(\ref{eq:gk}) and~(\ref{eq:cos}) together imply that~$p_k$ is a descent direction:
\[
-\nabla f(x_k)^\top p_k \; = \; -g_k^\top p_k + (g_k-\nabla f(x_k))^\top p_k \; \geq \; \kappa \|g_k\| \|p_k\| -
u_g \beta_k \|g_k\| \|p_k\| \; > \; 0.
\]
Condition~(\ref{eq:grad}) is also mild.
In a Newton or quasi-Newton context $p_k=-B_k^{-1}g_k$, it is essentially saying that the Hessian or secant matrix~$B_k$ is bounded (which is what is imposed in trust-region methods). One can relax~(\ref{eq:grad}) to $\|g_k\| \leq k^a \norm{p_k}$, with $a > 0$, although the sublinear rate $k^{-1/2}$ would then increase to $k^{-1/(2+a)}$.
We start the analysis by establishing a lower bound on the stepsize $\beta_k$.

\begin{lemma}
\label{lm:1}
Suppose that Assumptions~\ref{ass:lip}--\ref{ass:pk} hold. If $k$ is a successful \FullEval{} iteration, then
\[
\beta_k \; \geq \; \min\left \{\bar{\beta}, \frac{2 \tau (1 - c)}{ 2 u_g u_p + L }~\frac{ (-g_k)^\top p_k}{\norm{p_k}^2}\right\}.
\]
\end{lemma}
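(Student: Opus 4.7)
The plan is to split on whether backtracking occurs. If the initial trial $\beta = \bar{\beta}$ already satisfies the Armijo condition~(\ref{eq:sdc1}), then $\beta_k = \bar{\beta}$ and the first term of the claimed minimum bounds $\beta_k$ from below. Otherwise, since $k$ is a successful \FullEval{} iteration, the \textbf{while} loop of Algorithm~\ref{alg:fbfgsalg} must have been entered at least once and exited only after~(\ref{eq:sdc1}) became true while~(\ref{eq:key}) was still satisfied. In that case the previous trial stepsize $\beta' := \beta_k / \tau$ violated~(\ref{eq:sdc1}):
\[
f(x_k + \beta' p_k) \;>\; f(x_k) + c\,\beta'\, g_k^\top p_k.
\]

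Combining this with the standard quadratic upper bound implied by Assumption~\ref{ass:lip},
\[
f(x_k + \beta' p_k) \;\leq\; f(x_k) + \beta'\, \nabla f(x_k)^\top p_k + \frac{L}{2}(\beta')^2 \|p_k\|^2,
\]
I would subtract $f(x_k)$, divide by $\beta' > 0$, split $\nabla f(x_k) = g_k + (\nabla f(x_k) - g_k)$, and rearrange to
\[
(1-c)(-g_k)^\top p_k \;<\; (\nabla f(x_k) - g_k)^\top p_k + \frac{L}{2}\,\beta'\, \|p_k\|^2.
\]
A Cauchy--Schwarz estimate on the error term, combined with Assumption~\ref{ass:grfd} and condition~(\ref{eq:grad}) of Assumption~\ref{ass:pk}, would bound the first term on the right by $u_g u_p \beta_k \|p_k\|^2$. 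Because backtracking only shrinks the stepsize, $\beta_k \leq \beta'$, and so
\[
(1-c)(-g_k)^\top p_k \;<\; \frac{2 u_g u_p + L}{2}\,\beta'\,\|p_k\|^2.
\]
Solving for $\beta'$ and using $\beta_k = \tau \beta'$ delivers the second term of the claimed minimum exactly.

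The main obstacle I anticipate is the mismatch between the $\beta_k$ that appears inside the gradient-error bound of Assumption~\ref{ass:grfd} and the larger pre-backtracking stepsize $\beta'$ that naturally arises when applying the descent lemma at $x_k + \beta' p_k$. The clean resolution is the monotonicity $\beta_k \leq \beta'$, and it is precisely this substitution that forces the constant $2 u_g u_p + L$ (rather than $u_g u_p + L$) in the denominator. I would also flag that the implicit positivity $(-g_k)^\top p_k > 0$, used both when dividing by it and when concluding the final bound, is ensured by the cosine condition~(\ref{eq:cos}).
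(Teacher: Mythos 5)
Your proposal is correct and follows essentially the same route as the paper's proof: apply the Lipschitz descent lemma at the last rejected trial stepsize $\beta' = \beta_k/\tau$, split $\nabla f(x_k)$ into $g_k$ plus the approximation error, bound that error via Assumptions~\ref{ass:grfd} and~\ref{ass:pk} together with $\beta_k \leq \beta'$, and combine with the failed Armijo inequality before handling the $\beta_k = \bar{\beta}$ case separately. The point you flag about the mismatch between $\beta_k$ in the error bound and the larger $\beta'$ is exactly the step the paper also relies on, so there is nothing further to add.
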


\begin{proof}
Using a Taylor expansion and by Assumption \ref{ass:lip}, one has
\begin{align}
    f(x_k + \beta p_k) & \leq f(x_k) + \beta p_k^\top \nabla f(x_k) + \frac{\beta^2}{2} L \norm{p_k}^2 \nonumber \\
    & \leq f(x_k) + \beta p_k^\top ((\nabla f(x_k)-g_k) + g_k) + \frac{\beta^2}{2} L \norm{p_k}^2 \nonumber \\
    & \leq f(x_k) + \beta g_k^\top p_k + \left( \frac{1}{2} L + u_g u_p\right)\beta^2 \norm{p_k}^2, \label{eq:1}
\end{align}
where the last line follows from applying~(\ref{eq:gk}) and~(\ref{eq:grad}).

If $\beta_k \neq \bar{\beta}$, then $\beta = \beta_k / \tau$ does not satisfy the sufficient decrease condition, which means
\begin{equation} \label{eq:2}
f(x_k + (\beta_k/\tau) p_k) - f(x_k) \; > \; c \, (\beta_k/\tau) g_k^\top p_k.
\end{equation}
We can now plug $\beta=\beta_k/\tau$ into~(\ref{eq:1}), and combine it with (\ref{eq:2}), to obtain
\[
g_k^\top p_k + \left( \frac{1}{2} L + u_g u_p \right) \frac{\beta_k}{\tau} \norm{p_k}^2 \; > \;  c \, g_k^\top p_k.
\]
We conclude that
\[
\beta_k \; > \; \frac{2 \tau (1 - c)}{ 2 u_g u_p + L }\frac{ (-g_k)^\top p_k}{\norm{p_k}^2},
\]
where $1- c > 0$.
The proof is completed by combining the case where $\beta_k = \bar{\beta}$ and the one presented above.
\end{proof}
\vspace{1ex}

Next, we prove that the minimum norm of the true gradient decays
at the appropriate sublinear rate, for iterations~$k$ in the set~$\mathcal{I}_{SF}$ of successful \FullEval{} iterations.

\begin{theorem} \label{th:main-FE}
Let Assumptions \ref{ass:lip}--\ref{ass:pk} hold. Assume that~$f$ is bounded from below (and let $f_{low}$ be a lower bound). Then,
\[
\min_{i = 0 \dots k-1} \norm{\nabla f(x_i)} \; \leq \; (\bar{\beta} u_g +1) \sqrt{ \frac{f(x_0) - f_{low}}{M}} \frac{1}{\sqrt{ns_{k}}},
\]
where $ns_{k} = \vert \mathcal{I}_{SF} \cap \{0,\ldots,k-1\} \vert$ is the number of successful \FullEval{} iterations up to iteration~$k$, and
\begin{equation} \label{eq:M}
M \; = \; c \min\left\{\bar{\beta} \frac{\kappa}{u_p}, \frac{2 \tau \kappa^2 (1 - c)}{ 2 u_g u_p + L }\right\}.
\end{equation}
\end{theorem}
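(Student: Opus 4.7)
The plan is to derive a per-iteration decrease of order $\|g_k\|^2$ on successful \FullEval{} iterations, telescope over the $ns_k$ such iterations (using monotonicity on all other iterations), and then translate $\|g_k\|$ into $\|\nabla f(x_k)\|$ via Assumption~\ref{ass:grfd}.

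First I would take any $k \in \mathcal{I}_{SF}$ and combine the sufficient decrease condition~(\ref{eq:sdc1}) with~(\ref{eq:cos}) and~(\ref{eq:grad}) to write
\[
f(x_k)-f(x_{k+1}) \; \geq \; c \beta_k (-g_k)^\top p_k \; \geq \; c \beta_k \kappa \|g_k\|\|p_k\|.
\]
Then I would plug in the two-case lower bound on $\beta_k$ provided by Lemma~\ref{lm:1}. In the case $\beta_k \geq \bar\beta$, I bound $\|p_k\| \geq \|g_k\|/u_p$ by~(\ref{eq:grad}) to get a decrease of at least $c \bar\beta (\kappa/u_p) \|g_k\|^2$. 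In the case $\beta_k \geq \frac{2\tau(1-c)}{2u_g u_p + L}\frac{(-g_k)^\top p_k}{\|p_k\|^2}$, I substitute this into the decrease estimate and apply $(-g_k)^\top p_k \geq \kappa \|g_k\|\|p_k\|$ twice to obtain a decrease of at least $c \frac{2\tau \kappa^2 (1-c)}{2u_g u_p + L}\|g_k\|^2$. Taking the smaller of the two yields $f(x_k)-f(x_{k+1}) \geq M \|g_k\|^2$ with $M$ as in~(\ref{eq:M}).

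Next I telescope. For $i \notin \mathcal{I}_{SF}$, I observe that $f$ is non-increasing: unsuccessful \FullEval{} iterations leave $x_{k+1}=x_k$ (so $f$ is unchanged), and \LowEval{} iterations either accept a step satisfying~(\ref{eq:ds-sdc}) (which strictly decreases $f$) or leave $x_{k+1}=x_k$. Summing from $0$ to $k-1$ and using $f \geq f_{low}$,
\[
f(x_0)-f_{low} \;\geq\; \sum_{i \in \mathcal{I}_{SF}\cap\{0,\ldots,k-1\}} M\|g_i\|^2 \;\geq\; M \, ns_k \, \min_{i \in \mathcal{I}_{SF}\cap\{0,\ldots,k-1\}} \|g_i\|^2.
\]
Finally, since $\beta_i \leq \bar\beta$ for every $i \in \mathcal{I}_{SF}$ (backtracking only decreases $\beta$), Assumption~\ref{ass:grfd} gives
\[
\|\nabla f(x_i)\| \;\leq\; \|g_i\| + u_g \beta_i \|g_i\| \;\leq\; (1+\bar\beta u_g)\|g_i\|,
\]
so $\|g_i\|^2 \geq \|\nabla f(x_i)\|^2/(1+\bar\beta u_g)^2$. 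Substituting, rearranging, and taking square roots yields the asserted bound, after noting that $\min_{i=0,\ldots,k-1}\|\nabla f(x_i)\|$ is no larger than the minimum restricted to $\mathcal{I}_{SF}\cap\{0,\ldots,k-1\}$.

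The only place requiring care is the two-case analysis arising from Lemma~\ref{lm:1}: one must check that in each case the factor multiplying $\|g_k\|^2$ is indeed bounded below, which is why the constant $M$ is defined as the minimum of the two resulting expressions. Everything else (the telescoping across iteration types and the passage from $g_k$ to $\nabla f(x_k)$) is routine once the per-iteration decrease is in hand.
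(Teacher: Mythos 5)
Your proposal is correct and follows essentially the same route as the paper's proof: establish the per-iteration decrease $f(x_k)-f(x_{k+1}) \geq M\|g_k\|^2$ on successful \FullEval{} iterations by combining the sufficient decrease condition with Lemma~\ref{lm:1} and the bounds from Assumption~\ref{ass:pk}, telescope using monotonicity of $f$ on the remaining iterations, and convert $\|g_k\|$ to $\|\nabla f(x_k)\|$ via Assumption~\ref{ass:grfd} with $\beta_k \le \bar\beta$. The only difference is presentational: you spell out the two-case analysis behind the constant $M$ explicitly, whereas the paper compresses it into the inequalities~(\ref{eq:useful}) and the single line~(\ref{eq:Mei}).
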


\begin{proof}
Let $k \in \mathcal{I}_{SF}$.
First, using~(\ref{eq:cos}) and~(\ref{eq:grad}), we write
\begin{equation} \label{eq:useful}
\frac{((-g_k)^\top p_k)^2}{\|p_k\|^2} \; \geq \; \kappa^2 \|g_k\|^2 \quad   \text{and}\quad \quad
(-g_k)^\top p_k \; \geq \; \frac{\kappa}{u_p} \|g_k\|^2.
\end{equation}
Then, from the sufficient decrease condition~(\ref{eq:sdc1}) and
the lower bound for the stepsize established in Lemma~\ref{lm:1}, one obtains
\begin{equation}
\label{eq:Mei}
     f(x_k) - f(x_{k + 1}) \geq M \norm{g_k}^2,
\end{equation}
where $M$ is given in~(\ref{eq:M}).

By Assumption~\ref{ass:grfd}, the triangle inequality, and $\beta_k \leq \bar{\beta}$, we have
\[
\frac{1}{\bar{\beta} u_g  +1}  \norm{\nabla f(x_k)} \; \leq \; \norm{g_k}.
\]
Hence, plugging this bound into inequality~\eqref{eq:Mei} yields
\begin{equation} \label{eq:last-bound}
f(x_k) - f(x_{k+1}) \; \geq \; \frac{M}{(\bar{\beta} u_g + 1)^2} \norm{\nabla f(x_k)}^2.
\end{equation}

In both unsuccessful \FullEval{} and \LowEval{} iterations, the function value does not decrease, and thus from~(\ref{eq:last-bound})
\[
f(x_0) - f(x_k) \; \geq \; \frac{M}{(\bar{\beta} u_g +1)^2} \sum_{i \in \mathcal{I}_{SF} \cap \{0,\ldots,k-1\}}\norm{\nabla f(x_i)}^2.
\]
The proof is completed by taking the minimum from $0$ to $k-1$ on the successful \FullEval{} iterations and using the lower bound $f(x_k) \geq f_{low}$.
\end{proof}
\vspace{1ex}

\begin{remark}
One can see from Theorem~\ref{th:main-FE} that the number of successful \FullEval{} iterations needed to drive the gradient below a positive threshold~$\epsilon$ is of the order of~$\epsilon^{-2}$.

The number of \FullEval{} backtracking steps can also be bounded under an additional assumption. In fact, one has
$\beta_k = \bar{\beta} \tau^{nb_k}$, where ${nb_k}$ is the number of backtracks at \FullEval{} iteration~$k$.
By applying logs to this equation, one obtains $nb_k =  \log(\beta_k)/\log(\tau) - \log(\bar{\beta})/\log(\tau)$, where $\log(\tau)<0$.

When $p_k=g_k$, one can see from Assumption~\ref{ass:pk} that
\[
\frac{ (-g_k)^\top p_k}{\norm{p_k}^2} \; \geq \; \kappa \frac{ \|g_k\|}{\|p_k\|},
\]
and thus, from Lemma~\ref{lm:1}, $\beta_k$ is bounded from below. Hence, ${nb_k}$ is bounded by a constant.

When $p_k \neq g_k$, one needs the additional assumption $\| g_k \| \geq \ell_p \|p_k\|$, where $\ell_p$ is a positive constant, to also state that ${nb_k}$ is of the order of~1.
Such an assumption is guaranteed in a Newton or quasi-Newton approach $p_k= -B_k^{-1} g_k$ when $B_k$ is positive definite and bounded away from singularity.
\end{remark}

Suppose now that the sequence $\mathcal{I}_{SF}$ of \FullEval{} successful iterations is infinite.
From the rate established, one directly concludes that the gradient of~$f$ goes to zero for a subsequence of $\mathcal{I}_{SF}$ . However, since
the series $\sum_{i \in \mathcal{I}_{SF}}\norm{\nabla f(x_i)}^2$ is summable, we can also state $\lim_{k \in \mathcal{I}_{SF}} \nabla f(x_k) = 0$.

The convergence rate becomes linear when $f$ is strongly convex, which is the case when
there exists a positive constant $\mu >0$ such that for all $(x, y) \in \mathbb{R}^n$
\[
f(y) \; \geq \; f(x) + \nabla f(x)^\top (y-x) + \frac{\mu}{2} \|y - x\|^2.
\]
The argument uses the Polyak--Łojasiewicz (PL) inequality
\[
\|\nabla f(x_k) \|^2 \; \geq \; 2 \mu(f(x_k) - f(x_*)).
\]
Such inequality is typically deduced by combining
\[
2\mu (f(x_*) - f(x_k)) \; \geq \; 2 \mu \nabla f(x_k)^\top (x_* - x_k) + \mu^2 \|x_* - x_k\|^2,
\]
obtained using strong convexity with $x = x_k$ and $y = x_*$ (the unique minimizer of $f$), with
\[
2 \mu \nabla f(x_k)^\top (x_* - x_k) + \mu^2 \|x_* - x_k\|^2 \; \geq \; - \|\nabla f(x_k) \|^2.
\]
It remains to combine the PL inequality with~(\ref{eq:last-bound}), and recursively conclude that (when $k-1 \in \mathcal{I}_{SF}$)
\[
f(x_k) - f(x_*) \; \leq \; ( 1- \eta_\mu)^{ns_{k}} (f(x_0) - f(x_*))
\]
with $\eta_\mu = \frac{2 \mu c}{(\bar{\beta} u_g + 1)^2} \min\left(\bar{\beta} \frac{\kappa}{u_p}, \frac{2 \tau \kappa^2 (1 - c)}{ 2 u_g u_p + L }\right)$, and again $ns_{k} = \vert \mathcal{I}_{SF} \cap \{0,\ldots,k-1\} \vert$. Note that if is chosen such that $c \in (0,1/2)$, then $\eta_\mu \in (0,1)$.

\subsection{Convergence in the non-smooth case}\label{sec:convergence_nonsmooth}

The analysis in the non-smooth case is based on the failure of the \FullEval{} iterations.

\begin{assumption}\label{as:gb}
{\bf (\FullEval{} Failure)}
The sequence of gradient norms $\{\norm{g_k}\}_{k \in \mathcal{I}_{SF}}$ is bounded away from~$0$, i.e., there exists $\epsilon_g > 0$ such that for all iterations $k \in \mathcal{I}_{SF}$, one has $ \norm{g_k} > \epsilon_g$.
\end{assumption}

The switching condition~(\ref{eq:key}) allows us to prove under this assumption that the \LowEval{} iterations generate an infinite subsequence of iterates driving the direct-search parameter~$\alpha_k$ to zero.
This result requires the forcing function to satisfy Assumption~\ref{ass:rho}.

\begin{assumption}\label{ass:rho}
The function $\rho$ is positive, non-decreasing, and satisfies $\lim_{\alpha \rightarrow 0^+} \rho(\alpha)/\alpha = 0$.
\end{assumption}

One can see from the proof of Lemma~\ref{the:nsc} (below) that \FullEval{} steps are essentially considered as {\it search steps} from the direct-search perspective of the \LowEval{} iterations.

\begin{lemma}
\label{the:nsc}
Let Assumptions~\ref{ass:pk}--\ref{ass:rho} hold. Assume that the sequence of iterates~$\{ x_k \}$ is bounded. Then, there exists a point $x_*$ and a subsequence $\mathcal{K} \subset \mathcal{I}_{UL}$ of unsuccessful \LowEval{} iterates for which
\[
\begin{array}{ccc}
     \displaystyle\lim_{k \in \mathcal{K}} x_k \; = \; x_* & \text{ and } & \displaystyle \lim_{k \in \mathcal{K}} \alpha_k = 0.
\end{array}
\label{eq:xstar}
\]
\end{lemma}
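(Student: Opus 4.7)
The plan is to exhibit an infinite subsequence of unsuccessful \LowEval{} iterations along which $\alpha_k \to 0$, and then extract a further convergent subsequence of $\{x_k\}$ by compactness. I argue by contradiction: suppose $\liminf_{k \in \mathcal{I}_{UL}} \alpha_k = \alpha^* > 0$. Because $\alpha_k$ can only decrease (by the factor $\theta$) at \LowEval{} unsuccessful iterations, this positive floor propagates to $\alpha_k \geq \theta \alpha^*$ for every $k$ past an initial index, and then $\rho(\alpha_k) \geq \rho(\theta \alpha^*) =: \rho^* > 0$ by monotonicity and positivity of $\rho$ from Assumption~\ref{ass:rho}.

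The core step is a uniform per-iteration decrease at every successful iteration. For $k \in \mathcal{I}_{SL}$ the bound $f(x_k) - f(x_{k+1}) \geq \rho(\alpha_k)$ is directly (\ref{eq:ds-sdc}). For $k \in \mathcal{I}_{SF}$ I chain the sufficient decrease condition (\ref{eq:sdc1}) with the angle condition (\ref{eq:cos}) and the norm bound (\ref{eq:grad}) to get $f(x_k) - f(x_{k+1}) \geq (c\kappa/u_p)\, \beta_k \|g_k\|^2$, then absorb $\beta_k \geq \gamma \rho(\alpha_k)$ from the switching condition (\ref{eq:key}) together with the \FullEval{} failure hypothesis $\|g_k\| > \epsilon_g$ from Assumption~\ref{as:gb}, landing on $f(x_k) - f(x_{k+1}) \geq C \rho(\alpha_k) \geq C \rho^*$ for some $k$-independent $C > 0$. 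Every successful iteration therefore produces a fixed strictly positive decrease. This chain is the main obstacle of the proof, since it must cleanly combine four separate ingredients into one bound that links the \FullEval{} decrease to the forcing function $\rho(\alpha_k)$ driving the \LowEval{} iterations.

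Boundedness of $\{x_k\}$ together with continuity of $f$ (from local Lipschitzness) makes $f$ bounded below on the iterates, so telescoping the decreases forces $|\mathcal{I}_{SF} \cup \mathcal{I}_{SL}|$ to be finite. After some index $K$ every iteration is thus either a \FullEval{} failure or an unsuccessful \LowEval{}. The algorithm's rule that \FullEval{} failure immediately sets $t_{k+1} = \LowEval{}$, combined with the absence of any successful \LowEval{} past $K$, forces infinitely many unsuccessful \LowEval{} iterations past $K$. Each of these shrinks $\alpha_k$ by the factor $\theta$, with no successful \LowEval{} to undo the shrinkage, so $\alpha_k \to 0$ along $\mathcal{I}_{UL}$, contradicting $\alpha^* > 0$. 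Hence $\liminf_{k \in \mathcal{I}_{UL}} \alpha_k = 0$; extracting an unsuccessful \LowEval{} subsequence realizing this liminf and then a further convergent subsequence of $\{x_k\}$ by compactness delivers the claimed point $x_*$.
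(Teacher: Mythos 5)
Your proof is correct and rests on the same core ingredients as the paper's: the per-iteration decrease for successful \FullEval{} iterations obtained by chaining~(\ref{eq:sdc1}), (\ref{eq:cos}), (\ref{eq:grad}), the switch condition~(\ref{eq:key}), and Assumption~\ref{as:gb}, together with~(\ref{eq:ds-sdc}) for successful \LowEval{} iterations and the geometric contraction of $\alpha_k$ at unsuccessful \LowEval{} steps. The only difference is organizational: the paper argues directly (summability of $\sum_i \rho(\alpha_i)$ over successful iterations gives $\alpha_k \to 0$ along the \emph{whole} sequence), whereas you argue by contradiction via a uniform lower bound $\rho^*$ on the decrease, which yields only $\liminf_{k \in \mathcal{I}_{UL}} \alpha_k = 0$ --- exactly what Lemma~\ref{the:nsc} asserts.
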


\begin{proof}
First, consider that there is an infinity of iterations in $\mathcal{I}_{SF} \cup \mathcal{I}_{UF} \cup \mathcal{I}_{SL}$. These are all iterations~$k$ for which $\alpha_k$ does not decrease. By the second inequality in (\ref{eq:useful}), all successful \FullEval{} iterations $k \in \mathcal{I}_{SF}$
yield
\[
f(x_{k+1}) \; \leq \; f(x_k) - c \, (\kappa / u_p) \beta_{k} \norm{g_k}^2.
\]
From Assumption~\ref{as:gb} and the fact that $\beta_k$ satisfies $\beta_k \geq \gamma \rho(\alpha_k)$,
\begin{equation} \label{eq:dec1}
f(x_{k+1}) - f(x_k) \; \leq \;  - c \, (\kappa / u_p) \gamma \epsilon_g \rho(\alpha_k).
\end{equation}
Successful \LowEval{} iterations $k \in \mathcal{I}_{SL}$ achieve sufficient decrease,
\begin{equation} \label{eq:dec2}
f(x_{k+1}) - f(x_k) \; \leq \; - \rho(\alpha_k).
\end{equation}
Note that in \FullEval{} unsuccessful iterations $k \in \mathcal{I}_{UF}$ neither $x_k$ nor $\alpha_k$ changes.

Hence, given that for unsuccessful \LowEval{} iterations ($\mathcal{I}_{UL}$) the function does not decrease, we can sum from $0$ to $k \in \mathcal{I}_{SF} \cup \mathcal{I}_{UF} \cup \mathcal{I}_{SL}$ the inequalities~(\ref{eq:dec1}) and~(\ref{eq:dec2}) to obtain
\[
f(x_0) - f(x_{k+1}) \; \geq \;  c \, \gamma \, (\kappa / u_p) \epsilon_g \sum_{i \in \mathcal{I}_{SF}} \rho(\alpha_i) + \sum_{i \in \mathcal{I}_{SL}} \rho(\alpha_i).
\]
By the boundedness (from below) of~$f$, we conclude that the series is summable, which implies
$\lim_{k \in \mathcal{I}_{SF} \cup \mathcal{I}_{UF} \cup \mathcal{I}_{SL}} \rho(\alpha_k) = 0$. In conjunction with Assumption~\ref{ass:rho}, this in turn implies $\lim_{k \in \mathcal{I}_{SF} \cup \mathcal{I}_{UF} \cup \mathcal{I}_{SL}} \alpha_k = 0$.

It remains to consider the iterations in $\mathcal{I}_{UL}$. For each $k \in \mathcal{I}_{UL}$ corresponding to an unsuccessful \LowEval{} iteration, consider the previous iteration $k'=k'(k) \in \mathcal{I}_{SL} \cup \mathcal{I}_{UF} \cup \mathcal{I}_{SL}$ with $k' < k$
($k'$ could be zero). The direct-search stepsize can then be written as $\alpha_k = \theta^{k - k'} \alpha_{k'}$. Since $k' \to \infty$ and $\tau\in (0,1)$, one obtains $\alpha_k \to 0$ for all~$k \in \mathcal{I}_{UL}$.

We have thus proved that $\alpha_k$ goes to zero for all~$k$. Since $\alpha_k$ is only decreased in
unsuccessful \LowEval{} iterations, there must be an infinite subsequence of those.
From the boundedness of the sequence of iterates, one can extract a subsequence $\mathcal{K}$ of that subsequence satisfying the statement of the lemma.
\end{proof}
\vspace{1ex}

The main theorem uses the notion of generalized Clarke derivative~\cite{FHClarke_1990} at $x$ along a direction~$d \in \mathbb{R}^n$
\[
f^{\circ}(x;d) \; = \; \lim_{y \to x} \sup_{t \downarrow 0} \frac{f(y + t d) - f(y)}{t},
\]
which is well defined if $f$ is Lipschitz continuous around the point $x$.
Establishing that there is a limit point which is Clarke stationary requires the density of the so-called refining directions to be dense in the unit sphere. The proof follows the arguments
in~\cite{CAudet_JEDennis_2002,CAudet_JEDennis_2006,LNVicente_ALCustodio_2012}.

\begin{theorem} \label{th:Clarke-stat}
Let Assumptions~\ref{ass:pk}--\ref{ass:rho} hold. Assume that the sequence of iterates~$\{ x_k \}$ is bounded.
Let the function $f$ be Lipschitz continuous around the point $x_*$ defined in Lemma~\ref{eq:xstar}. Let the set of limit points of
\begin{equation} \label{eq: ref-dir}
\left\{ \frac{d_k}{\norm{d_k}}, \; d_k \in D_k, k \in \mathcal{K} \right\}
\end{equation}
be dense in the unit sphere, where $\mathcal{K} \subset \mathcal{I}_{UL}$ is given in Lemma~\ref{the:nsc}.

Then, $x_*$ is a Clarke stationary point, i.e., $f^{\circ}(x_*;d) \geq 0$ for all $d \in \mathbb{R}^n$.
\end{theorem}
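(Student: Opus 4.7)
The plan is to follow the classical Clarke stationarity argument for direct search (as in the references \cite{CAudet_JEDennis_2002,CAudet_JEDennis_2006,LNVicente_ALCustodio_2012}), but applied only along the refining subsequence $\mathcal{K} \subset \mathcal{I}_{UL}$ supplied by Lemma~\ref{the:nsc}. For every $k \in \mathcal{K}$ the iteration is an unsuccessful \LowEval{} iteration, so the direct-search sufficient decrease condition~(\ref{eq:ds-sdc}) fails for every $d_k \in D_k$, meaning
\[
f(x_k + \alpha_k d_k) \; > \; f(x_k) - \rho(\alpha_k).
\]
The idea is to massage this inequality into a difference quotient that can be identified with the definition of $f^{\circ}(x_*;d)$, by fixing an arbitrary direction $d$ in the unit sphere and extracting a sub-subsequence along which the normalized polling directions converge to $d$.

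First I would fix $d$ with $\|d\|=1$. By the density assumption on~(\ref{eq: ref-dir}), there is $\mathcal{K}' \subset \mathcal{K}$ such that $\tilde d_k := d_k/\|d_k\| \to d$ for some choice $d_k \in D_k$ with $k \in \mathcal{K}'$. Dividing the failure inequality by $t_k := \alpha_k \|d_k\|$ gives
\[
\frac{f(x_k + t_k \tilde d_k) - f(x_k)}{t_k} \; > \; -\frac{\rho(\alpha_k)}{t_k}.
\]
Next I would replace $\tilde d_k$ by $d$ using local Lipschitz continuity of $f$ around~$x_*$: for $k \in \mathcal{K}'$ sufficiently large (so that $x_k + t_k \tilde d_k$ and $x_k + t_k d$ stay inside a Lipschitz neighborhood of $x_*$ with constant $L_f$),
\[
\left| \frac{f(x_k + t_k \tilde d_k) - f(x_k + t_k d)}{t_k} \right| \; \leq \; L_f \, \|\tilde d_k - d\|.
\]
Combining the two inequalities and recalling that $y_k := x_k \to x_*$ and $t_k \to 0^+$, taking the $\limsup$ along $\mathcal{K}'$ yields
\[
f^{\circ}(x_*;d) \; \geq \; \limsup_{k \in \mathcal{K}'} \frac{f(y_k + t_k d) - f(y_k)}{t_k} \; \geq \; -\limsup_{k \in \mathcal{K}'} \frac{\rho(\alpha_k)}{t_k} - L_f \lim_{k\in\mathcal{K}'} \|\tilde d_k - d\| \; = \; 0.
\]
Since $d$ was arbitrary, this is exactly Clarke stationarity.

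The main obstacle I expect is the handling of $t_k = \alpha_k \|d_k\|$: one needs $\|d_k\|$ bounded above so that $t_k \to 0$ (which is required for the Clarke derivative identification), and bounded away from zero so that $\rho(\alpha_k)/t_k \to 0$ follows from Assumption~\ref{ass:rho}. Both conditions are standard in the direct-search literature and are trivially satisfied by the practical \LowEval{} scheme of the paper, where $D_k$ consists of a random unit vector and its negative, so $\|d_k\|=1$ throughout. Modulo this mild boundedness assumption on the polling directions (implicit in the density hypothesis used in~\cite{CAudet_JEDennis_2006,LNVicente_ALCustodio_2012}), the remainder of the argument is a direct adaptation of the classical proof and no further difficulty is anticipated.
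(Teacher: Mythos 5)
Your proposal is correct and follows essentially the same route as the paper: exploit the failure of the sufficient decrease condition~(\ref{eq:ds-sdc}) on the unsuccessful \LowEval{} subsequence $\mathcal{K}$, swap the polling direction for its limit via local Lipschitz continuity of $f$ around $x_*$, and let $\rho(\alpha_k)/\alpha_k \to 0$ (Assumption~\ref{ass:rho}) absorb the forcing term. The only cosmetic difference is that the paper first proves $f^{\circ}(x_*;\bar d)\geq 0$ for each limit direction $\bar d$ and then invokes continuity of $f^{\circ}(x_*;\cdot)$ together with density, whereas you argue directly for an arbitrary unit $d$ (using that the closed set of limit points, being dense, is the whole sphere); your explicit treatment of $t_k=\alpha_k\norm{d_k}$ and the needed two-sided boundedness of $\norm{d_k}$ is a detail the paper glosses over by working with $\alpha_k$ alone.
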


\begin{proof}
Let $\bar{d}$ be a limit point of~(\ref{eq: ref-dir}), identified for a certain subsequence $\mathcal{L} \subseteq \mathcal{K}$. Then, from basic properties of the generalized Clarke derivative, and $k \in \mathcal{L}$,
\begin{align*}
    f^{\circ}(x_*;\bar{d}) &  = \lim_{x_k \to x_*} \sup_{\alpha_k \downarrow 0} \frac{f(x_k + \alpha_k \bar{d}) - f(x_k)}{\alpha_k}\\
    & \geq \lim_{x_k \to x_*} \sup_{\alpha_k \downarrow 0} \left\{ \frac{f(x_k + \alpha_k d_k) - f(x_k)}{\alpha_k} - L_f^* \| d_k - \bar{d} \| \right\} \\
    & = \lim_{x_k \to x_*} \sup_{\alpha_k \downarrow 0} \left\{ \frac{f(x_k + \alpha_k d_k) - f(x_k)}{\alpha_k}
     + \frac{\rho(\alpha_k)}{\alpha_k} \right\},
\end{align*}
where $L_f^*$ is the Lipschitz constant of $f$ around $x_*$.
Since $k \in \mathcal{L}$ are unsuccessful \LowEval{} iterations, it follows that $f(x_k + \alpha_k d_k) > f(x_k) - \rho(\alpha_k)$ which implies that
\[
\lim_{x_k \to x_*} \sup_{\alpha_k \downarrow 0} \frac{f(x_k + \alpha_k d_k) - f(x_k) + \rho(\alpha_k)}{\alpha_k} \; \geq \; 0.
\]
From this and Assumption~\ref{ass:rho}, we obtain $f^{\circ}(x_*;\bar{d}) \geq 0$.
Given the continuity of $f^{\circ}(x_*;\cdot)$, one has for any $d \in \mathbb{R}^n$
such that $\|d\|= 1$,
$f^{\circ}(x_*;d) = \lim_{\bar{d} \to d} f^{\circ}(x_*;\bar{d}) \geq 0$.
\end{proof}

\subsection{More on the smooth case (use of finite difference gradients)}

Let us return to the smooth case to clarify the imposition of Assumption~\ref{ass:grfd}. Such an assumption is related to the satisfaction of the so-called criticality step in DFO trust-region methods~\cite{ARConn_KScheinberg_LNVicente_2009,ARConn_KScheinberg_LNVicente_2009b} when using fully linear models.
In the context of the line-search method used in the \FullEval{} iterations, those models correspond to using a finite difference (FD) scheme to compute the approximate gradient~$g_k$.

The $i$-th component of the forward FD approximation of the gradient at $x_k$ is defined as
\begin{equation} \label{eq:FD}
[\nabla_{h_k} f(x_k)]_i \; = \; \frac{f(x_k + h_k e_i) - f(x_k)}{h_k}, \quad i=1,\ldots,n,
\end{equation}
where $h_k$ is the finite difference parameter and $e_i \in \mathbb{R}^n$ is the i-th canonical vector. Computing such a gradient approximation costs $n$ function evaluations per iteration, and it is implicitly assumed that such evaluations can be made.
By using a Taylor expansion, the error in the FD gradient (in the smooth and noiseless setting) can be shown to satisfy
\begin{equation} \label{eq:FDbound}
\|\nabla f(x_k) - \nabla_{h_k} f(x_k) \| \; \leq \; \frac{1}{2} \sqrt{n} \, L \, h_k.
\end{equation}

It becomes then clear that one way to ensure Assumption~\ref{ass:grfd} in practice, when $g_k = \nabla_{h_k} f(x_k)$, is to enforce $h_k \leq u_g' \beta \|\nabla_{h_k} f(x_k)\|$, for some $u_g'>0$, in which case $u_g = \frac{1}{2} \sqrt{n} \, L \, u_g'$.
Enforcing such a condition is expensive but can be rigorously done through a criticality-step type argument (see Algorithm~\ref{alg:cric}).

\begin{algorithm}[H]
 {
\caption{Criticality step: Performed if $h_k > u_g' \beta \norm{\nabla_{h_k} f(x_k)}$}
  \label{alg:fastsr1}
 {\bf Input:} $h_k$, $\nabla_{h_k} f(x_k)^{(0)} = \nabla_{h_k} f(x_k)$, $\beta$, and $\omega \in (0,1)$. Let $j=0$.

 \textbf{Output}: $\nabla_{h_k} f(x_k) = \nabla_{h_k} f(x_k)^{(j)}$ and $h_k$.

\begin{algorithmic}[1]
 \State \textbf{While} $h_k > u_g' \beta \norm{\nabla_{h_k} f(x_k)^{(j)}}$ \textbf{Do}
 \State \hspace{0.25cm} Set $j = j + 1$ and $h_k = \omega^j u_g' \beta \norm{\nabla_{h_k} f(x_k)^{(0)}}$.
    \State \hspace{0.25cm} Compute the FD approximation $\nabla_{h_k} f(x_k)^{(j)}$ using~(\ref{eq:FD}).
  \end{algorithmic}
  \label{alg:cric}
  }
\end{algorithm}

Proposition~\ref{prop:crit} shows that Algorithm~\ref{alg:cric} terminates in a finite number
of steps.

\begin{proposition}\label{prop:crit}
Let Assumption~\ref{ass:lip} hold.
Condition $h_k \leq u_g' \beta \|\nabla_{h_k} f(x_k)\|$ can be attained in a finite number of steps using Algorithm~\ref{alg:cric} if $ \norm{\nabla f(x_k)} \neq 0$.
\end{proposition}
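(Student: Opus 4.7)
The plan is to argue by contradiction. Introduce notation $h_k^{(j)}$ for the value of the finite-difference parameter after the $j$-th pass of the while loop. By the update rule in Algorithm~\ref{alg:cric}, one has the closed form $h_k^{(j)} = \omega^j u_g' \beta \norm{\nabla_{h_k} f(x_k)^{(0)}}$, so $h_k^{(j)} \to 0$ at a geometric rate in~$j$ (assuming the initialization gives $\nabla_{h_k} f(x_k)^{(0)} \neq 0$; see below).

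Next, I would invoke the forward FD error bound~(\ref{eq:FDbound}) applied at the parameter $h_k^{(j)}$,
\[
\norm{\nabla f(x_k) - \nabla_{h_k^{(j)}} f(x_k)} \; \leq \; \tfrac{1}{2}\sqrt{n}\, L\, h_k^{(j)},
\]
which is justified by Assumption~\ref{ass:lip}. Combined with $h_k^{(j)} \to 0$ and the reverse triangle inequality, and using the hypothesis $\norm{\nabla f(x_k)}>0$, this yields an index~$J$ such that
\[
\norm{\nabla_{h_k^{(j)}} f(x_k)} \; \geq \; \tfrac{1}{2}\norm{\nabla f(x_k)} \; > \; 0 \quad \text{for all } j \geq J.
\]

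Now suppose the while loop never terminates. Then the guard $h_k^{(j)} > u_g' \beta \norm{\nabla_{h_k^{(j)}} f(x_k)}$ holds for every~$j$. Plugging in the explicit expression for $h_k^{(j)}$ and dividing by $u_g' \beta$ gives
\[
\omega^j \norm{\nabla_{h_k} f(x_k)^{(0)}} \; > \; \norm{\nabla_{h_k^{(j)}} f(x_k)}.
\]
For $j \geq J$, the left-hand side tends to zero geometrically while the right-hand side is bounded below by the fixed positive quantity $\tfrac{1}{2}\norm{\nabla f(x_k)}$, a contradiction. Hence the guard must fail for some finite~$j$.

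The main technical obstacle is less analytical than initializational: the rescaling $h_k^{(j)} = \omega^j u_g' \beta \norm{\nabla_{h_k} f(x_k)^{(0)}}$ only makes sense when $\nabla_{h_k} f(x_k)^{(0)} \neq 0$, otherwise $h_k^{(j)} = 0$ for $j \geq 1$ and the FD formula collapses. I would dispatch this upfront by observing that, under $\norm{\nabla f(x_k)}>0$, the bound~(\ref{eq:FDbound}) guarantees $\nabla_{h_k} f(x_k)^{(0)} \neq 0$ whenever the initial $h_k^{(0)}$ satisfies $h_k^{(0)} < 2\norm{\nabla f(x_k)}/(\sqrt{n}\, L)$, a mild standing requirement on the initial finite-difference parameter.
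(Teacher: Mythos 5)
Your proof is correct and follows essentially the same route as the paper: both argue by contradiction, combining the geometric decay $h_k^{(j)} = \omega^j u_g'\beta\norm{\nabla_{h_k} f(x_k)^{(0)}}$ forced by the while-guard with the finite-difference error bound~(\ref{eq:FDbound}) to conclude that an infinite loop would force $\norm{\nabla f(x_k)} = 0$; the paper merely packages the two inequalities via the triangle inequality into a single bound on $\norm{\nabla f(x_k)}$, whereas you use the reverse triangle inequality to lower-bound the FD gradient norm, which is an equivalent rearrangement. Your extra remark on the degenerate initialization $\nabla_{h_k} f(x_k)^{(0)} = 0$ is a legitimate edge case the paper silently skips, and handling it via a mild upper bound on the initial $h_k$ is a reasonable fix.
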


\begin{proof}
Let us suppose that the algorithm loops infinitely. Then, for all $j \geq 1$,
using Step~3 and the satisfaction of the while--condition in Step~1,
\begin{equation} \label{eq:crit1}
     \norm{\nabla_{h_k} f(x_k)^{(j)}} \; \leq \;  \omega^j \norm{\nabla_{h_k} f(x_k)^{(0)}}.
\end{equation}
On the other hand, for all $j \geq 1$, the FD bound~(\ref{eq:FDbound}), followed by Step~3, gives us
\begin{equation} \label{eq:crit2}
\norm{\nabla f(x_k) - \nabla_{h_k} f(x_k)^{(j)}} \; \leq \; \frac{1}{2} \sqrt{n}  L \,  \omega^j u_g' \beta_k\norm{\nabla_{h_k} f(x_k)^{(0)}}.
\end{equation}
Hence, using (\ref{eq:crit1})--(\ref{eq:crit2}), we have
\[
\norm{\nabla f(x_k)} \; \leq \; \norm{\nabla f(x_k) - \nabla_{h_k} f(x_k)^{(j)}} + \norm{\nabla_{h_k} f(x_k)^{(j)}} \; \leq \; \left( \frac{\sqrt{n} L u_g' \beta_k}{2}+ 1\right) \omega^j \norm{\nabla_{h_k} f(x_k)^{(0)}}.
\]
By taking limits (and noting that $\omega \in (0,1)$), we conclude that $\norm{ \nabla f(x_k)} = 0$, which yields a contradiction.
\end{proof}

\section{Numerical setup}
\label{sec:setup}

In this section, we will first present our implementation choices for the \FLE{} method. The numerical environment of our experiments is also introduced (other methods/solvers tested, test problems chosen, and performance profiles). The tests were run using MATLAB R2019b on an Asus Zenbook with 16GB of RAM and an Intel Core i7-8565U processor running at 1.80GHz.

\subsection{Our practical full-low evaluation implementation}\label{sec:practical_implementation}

Our proposed \FullEval{} line-search iteration consists of using a direction~$p_k$ of the form~$p_k = - H_k g_k$. Our choice for the approximate gradient~$g_k$ is forward FD~(\ref{eq:FD}) with $h_k$ set to the square root of Matlab's machine precision.
Our choice for $H_k$ is the Broyden-Fletcher-Goldfarb-Shanno (BFGS) quasi-Newton update \citep{CGBroyden_1970,RFletcher_1970,DGoldfarb_1970,DFShanno_1970}.
The choice of BFGS is justified by its strong performance in the presence of derivatives~\cite{JNocedal_SJWright_2006}. The combination of BFGS with FD is regarded as a useful resource for derivative-free optimization in many NLP solvers such as for instance the {\tt fminunc} Matlab routine, and as shown in \cite{ASBerahas_RHByrd_JNocedal_2019,ASBerahas_LCao_KChoromaski_KScheinberg_2021,HJMShi_MQXuan_FOztoprak_JNocedal_2021}.
Our \FullEval{} line-search iteration is described in Algorithm~\ref{alg:BFGS-FD}.

BFGS updates the inverse Hessian approximation $H_k$ using~(\ref{eq:BFGS}),
where $j_k$ is the previous \FullEval{} iteration, and $s_k$ and $y_k$ are given in~(\ref{eq:s-and-y}). In our implementation, the first \FLE{} iteration is always \FullEval{}.
In the non-convex case, the products $s_k^\top y_k$ cannot be ensured positive. In order to maintain the positive definiteness of the matrix~$H_k$, we skip the BFGS update if $s_k^\top y_k \geq \epsilon_{c} \|s_k\| \|y_k\|$ is not satisfied, for $\epsilon_{c} \in (0,1)$ independent of~$k$. In the implementation, we used $\epsilon_{c}=10^{-10}$.
The line-search follows the backtracking scheme described in Algorithm~\ref{alg:fbfgsalg}, using  $\bar{\beta} = 1$ and $\tau=0.5$. A key feature of our \FLE{} methodology that led to rigorous results (see the proof of Lemma~\ref{the:nsc}) is to stop the line-search once condition~(\ref{eq:key}) is violated. In our implementation, we used
\begin{equation} \label{eq:forcing-imp}
\gamma \; = \; 1, \quad
\rho(\alpha_k) \; = \; \min(\gamma_1, \gamma_2 \alpha_k^2), \quad \mbox{with} \quad \gamma_1=10^{-5} \quad \mbox{and} \quad \gamma_2=10^{-3}.
\end{equation}

When $k=0$, we perform a backtracking line-search using $p_0=-g_0$ (and update $t_1$ and $x_1$) as in Algorithm~\ref{alg:fbfgsalg} (with constants as in Algorithm~\ref{alg:BFGS-FD}). The initialization of $H_0$ is done as follows. If $t_1=\FullEval{}$, then we set $H_0 = (y_0^\top s_0)/(y_0^\top y_0) I$. This formula attempts to make the size of $H_0$ similar to the one of $\nabla^2 f(x_0)^{-1}$ (see~\citep{JNocedal_SJWright_2006}).
However, if $t_1=\LowEval{}$, we set $H_0=I$.

\begin{algorithm}
\caption{\FullEval{} Iteration: BFGS with FD Gradients}
  \label{alg:BFGS-FD}
 \textbf{Input}: Iterate $x_k$ with $k \geq 1$. Information $(x_{j_k},g_{j_k},H_{j_k})$ from the previous \FullEval{} iteration~$j_k$  (if $k>0$).
  Backtracking parameters $\bar{\beta}>0$ and $\tau \in (0,1)$. Other parameters $\epsilon_{c},\gamma_1,\gamma_2>0$, $\gamma = 1$.

\textbf{Output}: $t_{k+1}$ and $(x_{k+1},H_k,g_k)$. Return the number $nb_k$ of backtrack attempts.

 \begin{algorithmic}[1]
  \State Compute the FD gradient $g_k = \nabla_{h_k} f(x_k)$ using~(\ref{eq:FD}).
  \State Set
  \begin{equation} \label{eq:s-and-y}
  s_k = x_{k} - x_{j_k} \quad \mbox{and} \quad y_{k} = g_k - g_{j_k}.
  \end{equation}
  \State {\bf If} $s_k^\top y_k \geq \epsilon_{c} \|s_k\| \|y_k\|$,
  set
\begin{equation} \label{eq:BFGS}
H_{k} \; = \; \left (I - \frac{s_k y_k^\top}{y_k^\top s_k}\right) H_{j_k}  \left(I - \frac{y_k s_k^\top }{y_k^\top s_k}\right) + \frac{s_k s_k^\top }{y_k^\top s_k}.
\end{equation}
 \State {\bf Else}, set $H_k = H_{j_k}$.
  \State Compute the direction $- H_k g_k$.
  \State Perform a backtracking line-search, and update $t_{k+1}$ and $x_{k+1}$ as in Algorithm~\ref{alg:fbfgsalg}.
  \end{algorithmic}
\end{algorithm}

For the \LowEval{} iterations, we considered the set of polling directions $D_k$ to be formed by one random direction and its negative, a variant which has shown superior performance compared with deterministic direct search based on PSS (see~\cite{SGratton_et_al_2015}). The random direction is drawn uniformly on the unit sphere. Note that the cost in function evaluations (at most~$2$ per iteration) is considerably lower than in our \FullEval{} iteration, especially when~$n$ is large.
The forcing function $\rho(\alpha_k)$ used to accept polling points is set as in~(\ref{eq:forcing-imp}).
As suggested after Algorithm~\ref{alg:pds},
the switch from \LowEval{} to \FullEval{} will occur when the number~$nu_k$ of consecutive unsuccessful \LowEval{} iterations reaches the number~$nb_{j_k}$ of backtracks done in the last \FullEval{} line-search.
In the implementation, we set the stepsize updating factors to $\lambda=1/\theta=2$.
See Algorithm~\ref{alg:pDS-imp}.

\begin{algorithm}[H]
\caption{\LowEval{} Iteration: Probabilistic Direct Search}
  \label{alg:pDS-imp}
 \textbf{Input}: Iterate $x_k$ and stepsize $\alpha_k$. Direct-search parameters $\lambda \geq 1$ and $\theta \in (0,1)$. Number $nu_k$ of unsuccessful \LowEval{} iterations since last \FullEval{} iteration~$j_k$. (Set $nu_k = 0$ if $t_{k-1}=\FullEval$.) Number $nb_{j_k}$ of backtracks done at \FullEval{} iteration~$j_k$.

 \textbf{Output}: $t_{k+1}$, $x_{k+1}$, $\alpha_{k+1}$, and $nu_{k+1}$.
    \begin{algorithmic}[1]
  \State Generate $d \in \mathbb{R}^n$ uniformly on the unit sphere of $\mathbb{R}^n$, and set $D_k = [d,-d]$.
  \State Poll as in Algorithm~\ref{alg:pds}.
  \State Set $nu_{k+1} = nu_k$ in the successful case, $nu_{k+1} = nu_k + 1$ otherwise.
  \State \textbf{If} $nu_k < nb_{j_k}$, $t_{k+1}=\LowEval$.
  \State \textbf{Else}, $t_{k+1}=\FullEval$.
  \end{algorithmic}
\end{algorithm}

\subsection{Other solvers tested}

We compared the numerical performance of our implementation of \FLE{} to four other approaches:
(i) a line-search BFGS method based on FD gradients (as if there were only \FullEval{} iterations), referred to as \texttt{BFGS-FD};
(ii) probabilistic direct search (as if there were only \LowEval{} iterations), referred to as \texttt{pDS};
(iii) an interpolation-based trust-region solver, \DFOTR{}~\cite{ASBandeira_KScheinberg_LNVicente_2012}; (iv) a line-search BFGS method based on FD gradients and noise estimation, \FDLM{}~\cite{ASBerahas_RHByrd_JNocedal_2019}; (v) a mesh adaptive direct search solver, \NOMAD{}.

\DFOTR{}~\cite{ASBandeira_KScheinberg_LNVicente_2012} builds models by quadratic interpolation.
At the first iteration, the function is evaluated using a sample set of~$2n+1$ points, given by $x_0$ and $x_0\pm \Delta_0 e_i$, with $e_i$ the $i$-th canonical vector.
Until the cardinal number of the sample set reaches $p_{\max} = (n+1)(n+2)/2$,
all trust-region trial points $x_k+s_k$ are added to the sample set, and models are computed by minimum Frobenius norm interpolation~\cite{ARConn_KScheinberg_LNVicente_2009b,MJDPowell_2004}. Once there are~$p_{max}$ points in the sample set, a quadratic model is built by determined interpolation. Then, for each new trial point added, an existing sample point is discarded (the farthest away from~$x_k+s_k$).
When $\Delta_k$ falls below a certain threshold, points that are too far from the current iterate are discarded, expecting that the next iterations will refill the sample set, providing an effect similar to a criticality step.
The trust-region radius~$\Delta_k$ is updated as it is common in trust-region methods, the major difference being that it is never reduced when the sample set has less than~$n+1$ points.

\FDLM{}~\cite{ASBerahas_RHByrd_JNocedal_2019} is a linesearch BFGS method for noisy functions where the gradients are approximated using~FD. This approach differs from our \FullEval{} iterations in two main ways. First, in \FDLM{}~the noise level is estimated using~\cite{JJMore_SMWild_2011}, and such estimate is used to select the FD parameter as suggested in~\cite{JJMore_SMWild_2012}. Second, in~\cite{ASBerahas_RHByrd_JNocedal_2019}, the sufficient decrease condition is relaxed using the estimated noise level to prevent valuable points from being discarded. Moreover, to avoid estimating the noise at every iteration, the authors~\cite{ASBerahas_RHByrd_JNocedal_2019} developed a recovery procedure that estimates the noise only as needed (as the optimization progresses). The \FDLM{}~solver was chosen to compare our method to a version of FD-based BFGS where the noise is directly taken into account. We should emphasize again that in our proposed method, \FLE{}, the noise is directly handled by the \LowEval{} iterations.


\NOMAD{}~\cite{CAudet_etal_2021} is a solver that implements Mesh Adaptive Direct Search (MADS)~\citep{CAudet_JEDennis_2006} under general nonlinear constraints.
MADS, like pDS, belongs to the class of directional direct-search methods described in Section~\ref{sec:adv}, and the main differences lie
in the choice of polling directions and the decrease condition.
The polling directions belong to positive spanning sets that asymptotically cover the unit sphere densely.
Similarly to probabilistic direct search, the iterates and the mesh/step size are updated according to the outcome of the iteration,
but new points are accepted based on a simple decrease of the function.
Since its release in 2001, \NOMAD{} has been improved to account for various features and extensions such as polling strategies, surrogate functions, and parallelism.
In particular, the current default version includes a search step minimizing quadratic interpolation models based on previously evaluated poll points.




\subsection{Classes of problems tested and profiles used}\label{sec:problem_details}

We considered $62$ problems\footnote{{\tt ALLINITU}, {\tt ARGLINB}, {\tt ARGLINC}, {\tt BARD}, {\tt BEALE}, {\tt BOX3}, {\tt BRKMCC}, {\tt BROWNAL}, {\tt BROWNBS}, {\tt BROWNDEN}, {\tt CHNROSNB}, {\tt CUBE},
{\tt DECONVU}, {\tt DENSCHNA}, {\tt DENSCHNB}, {\tt DENSCHNC}, {\tt DENSCHND}, {\tt DENSCHNF}, {\tt DIXON3DQ}, {\tt DJTL}, {\tt ENGVAL2}, {\tt ERRINROS}, {\tt EXPFIT},
{\tt EXTROSNB}, {\tt GENHUMPS}, {\tt GROWTH}, {\tt GROWTHLS}, {\tt HAIRY}, {\tt HEART6LS}, {\tt HEART8LS}, {\tt  HELIX}, {\tt HILBERTA}, {\tt HILBERTB}, {\tt HIMMELBF},
{\tt HIMMELBG}, {\tt HUMPS}, {\tt JENSMP}, {\tt KOWOSB}, {\tt LOGHAIRY}, {\tt MARATOSB}, {\tt METHANB8}, {\tt MEXHAT}, {\tt MEYER3}, {\tt NONMSQRT}, {\tt PALMER1C},
{\tt PALMER1D}, {\tt PALMER2C}, {\tt PALMER3C}, {\tt PALMER4C}, {\tt PALMER4E}, {\tt PALMER5C}, {\tt PALMER5D}, {\tt PALMER6C}, {\tt PALMER7C}, {\tt PALMER8C}, {\tt ROSENBR},
{\tt SINEVAL}, {\tt SISSER}, {\tt TOINTQOR}, {\tt WATSON}, {\tt YFITU}, and {\tt ZANGWIL2}.}
from the \cuter~library~\cite{NIMGould_DOrban_PhLToint_2015} for the smooth case, with dimensions between $2$ and $51$; Table~\ref{table-small} summarizes the distribution of the problems in terms of their dimensions.

\begin{table}[H]
\centering
\begin{tabular}{|l|l|l|l|l|l|l|l|l|l|l|l|l|}
\hline
Dimension of the problem & 2  & 3 & 4 & 5 & 6 & 7 & 8  & 9 & 10 & 31 & 50 & 51 \\ \hline
Number of problems       & 21 & 9 & 5 & 1 & 2 & 1 & 10 & 1 & 5  & 2  & 4  & 1  \\ \hline
\end{tabular}
\caption{$62$ problems from~\cite{NIMGould_DOrban_PhLToint_2015}.\label{table-small}}
\end{table}

In order to test the scalability of the methods, we selected~12 problems\footnote{{\tt ARGLINA}, {\tt ARWHEAD}, {\tt  BROYDN3D}, {\tt  DQRTIC}, {\tt ENGVAL1}, {\tt FREUROTH}, {\tt PENALTY2}, {\tt NONDQUAR}, {\tt ROSENBR}, {\tt SINQUAD}, {\tt TRIDIA}, and {\tt WOODS}.} from the \cuter{} library~\cite{NIMGould_DOrban_PhLToint_2015} that represent problems with different features (non-linearity, non-convexity, partial separability) and for which one can vary the dimension~$n$. For these problems, we considered dimensions $n=40$ and $n=80$.

For the non-smooth and noisy cases, we chose~$53$ problems from the test set described in~\cite{JJMore_SMWild_2009}. The dimensions in this set vary from~2 to~12, and its distribution is summarized in Table~\ref{MW}.
The problems in~\cite{JJMore_SMWild_2009} come in three different perturbed forms: piecewise smooth, deterministic noise, and stochastic noise. In~\cite{JJMore_SMWild_2009}, the authors considered the multiplicative noise case, however, we also tested additive noise.

{\bf Additive noise.}
The function has the form $f(x) = \phi(x)+\varepsilon(x)$,
where $\phi$ is a smooth function. In the case of deterministic noise, $\varepsilon(x)= \epsilon_f \psi(x)$, where $\psi : x \in \mathbb{R}^n \to [-1,1]^n$ is a deterministic noisy function (see~\cite{JJMore_SMWild_2009} for a full description of the function $\psi$).  When considering stochastic noise, $\varepsilon(x)$ is a realization of a uniform random variable $U(-\epsilon_f, \epsilon_f)$.

{\bf Multiplicative noise.} The function takes the form $f(x) = \phi(x)(1 + \xi (x))$ where $\phi$ is a smooth function. As in the additive noise case, when the noise is deterministic $\xi (x) = \epsilon_f \psi(x)$. In the case of stochastic noise, the values $\xi (x)$ are drawn from a uniform random variable as described earlier. The multiplicative noise case can be regarded as a special case of the additive noise case as $f (x) = \phi(x) + \varepsilon(x)$, where $\varepsilon(x) = \phi(x) \xi(x)$. When the true optimal objective function value is equal to~$0$, the noise is also decaying to $0$ when converging to the optimum.

\begin{table}[H]
\centering
\begin{tabular}{|l|l|l|l|l|l|l|l|l|l|l|l|l|}
\hline
Dimension of the problem & 2  & 3 & 4 & 5 & 6 & 7 & 8  & 9 & 10 & 11 & 12  \\ \hline
Number of problems       & 5 & 6 & 5 & 4 & 4 & 5 & 6 & 5 & 4  & 4  & 5  \\ \hline
\end{tabular}
\caption{$53$ problems from~\cite{JJMore_SMWild_2009}.\label{MW}}
\end{table}

Performance profiles are a metric of comparison introduced in~\cite{EDDolan_JJMore_2002}
to assess the performance of a given set of solvers~$\mathcal{S}$ for a given set of problems~$\mathcal{P}$. They are a visual tool where the highest curve (top and left) corresponds to the solver with the best overall performance.
Let~$t_{p,s}>0$ be a performance measure of the solver $s \in \mathcal{S}$ on the problem $p \in \mathcal{P}$, which in our case was set to the number of function evaluations.
The curve for a solver~$s$ is defined as the fraction of problems where the performance ratio is at most $\alpha$,
\[
\rho_s(\alpha) \; = \; \frac{1}{|\mathcal{P}|} \text{size} \left\{ p \in \mathcal{P} : r_{p,s} \leq \alpha \right\},
\]
where the performance ratio~$r_{p,s}$ is defined as
\[
r_{p,s} \; = \; \frac{t_{p,s}}{\min \{ t_{p,s} : s \in \mathcal{S}\}}.
\] The convention $r_{p,s} = +\infty$ is used when a solver $s$ fails to satisfy the convergence test for problem~$p$.
The convergence test used is
\begin{equation}
\label{eq:convprof}
f(x_0) - f(x) \; \geq \; (1 - \tau) (f(x_0) - f_L),
\end{equation}
where $\tau > 0$ is a tolerance, $x_0$ is the starting point for the problem, and $f_L$ is computed for each problem $p \in \mathcal{P}$ as the smallest value of~$f$ obtained by any solver within a given number of function evaluations.
Solvers with the highest values of $\rho_s(1)$ are the most efficient, and those with the highest values of $\rho_s(\alpha)$, for large $\alpha$, are the most robust.

Users with expensive optimization problems are often interested in the performance of solvers as the number of function evaluations increases.
Specifically, they are interested in the percentage of problems that can be solved for a given tolerance and a given (small) budget of
evaluations.
Data profiles~\cite{JJMore_SMWild_2009} are a tool to visualize such a behavior. A data profile of a solver $s \in \mathcal{S}$ is defined as
\begin{equation}
d_s(\alpha) = \frac{1}{|\mathcal{P}|} \text{size} \left\{ p \in \mathcal{P} : \frac{t_{p,s}}{n_p +1} \leq \alpha \right\},
\label{eq:dataprof}
\end{equation}
where $n_p$ is the dimension of $p \in \mathcal{P}$, and again, $t_{p,s}$ is the number of function evaluations taken to satisfy~\eqref{eq:convprof}.
Similarly to performance profiles, $t_{p,s} = + \infty$ when the solver $s$ fails to satisfy~\eqref{eq:convprof} for problem $p$ (using a given number of function evaluations).
The budget of function evaluations in data profiles~(\ref{eq:dataprof}) is scaled by $n_p+1$, which is the number of evaluations required
to compute a simplex gradient.
While performance profiles curves reflect relative solvers performance, data profiles display each curve independently of the others.

\section{Numerical results}
\label{sec:numerical}

For performance profiles, the solvers were given a budget of $2000n$ function evaluations.
A budget of $100(n+1)$ was used for the data profiles in order to test the efficiency of the methods under small budgets of evaluations.
The optimality tolerance parameters were chosen smaller for the non-noisy cases, $\tau \in \{ 10^{-2},10^{-5} \}$, and larger for the noisy ones, $\tau \in \{ 10^{-1},10^{-3} \}$, to account for the relative noise level of the problems.

\subsection{Smooth problems}

Figure~\ref{fig:comp4} shows that \DFOTR{} (magenta downward triangles) is the most efficient method for the small smooth problems.

\begin{figure}[h]
\vskip -25ex
\hskip -20ex
\centering
\begin{minipage}[t]{0.47\textwidth}
  \includegraphics[scale=0.5]{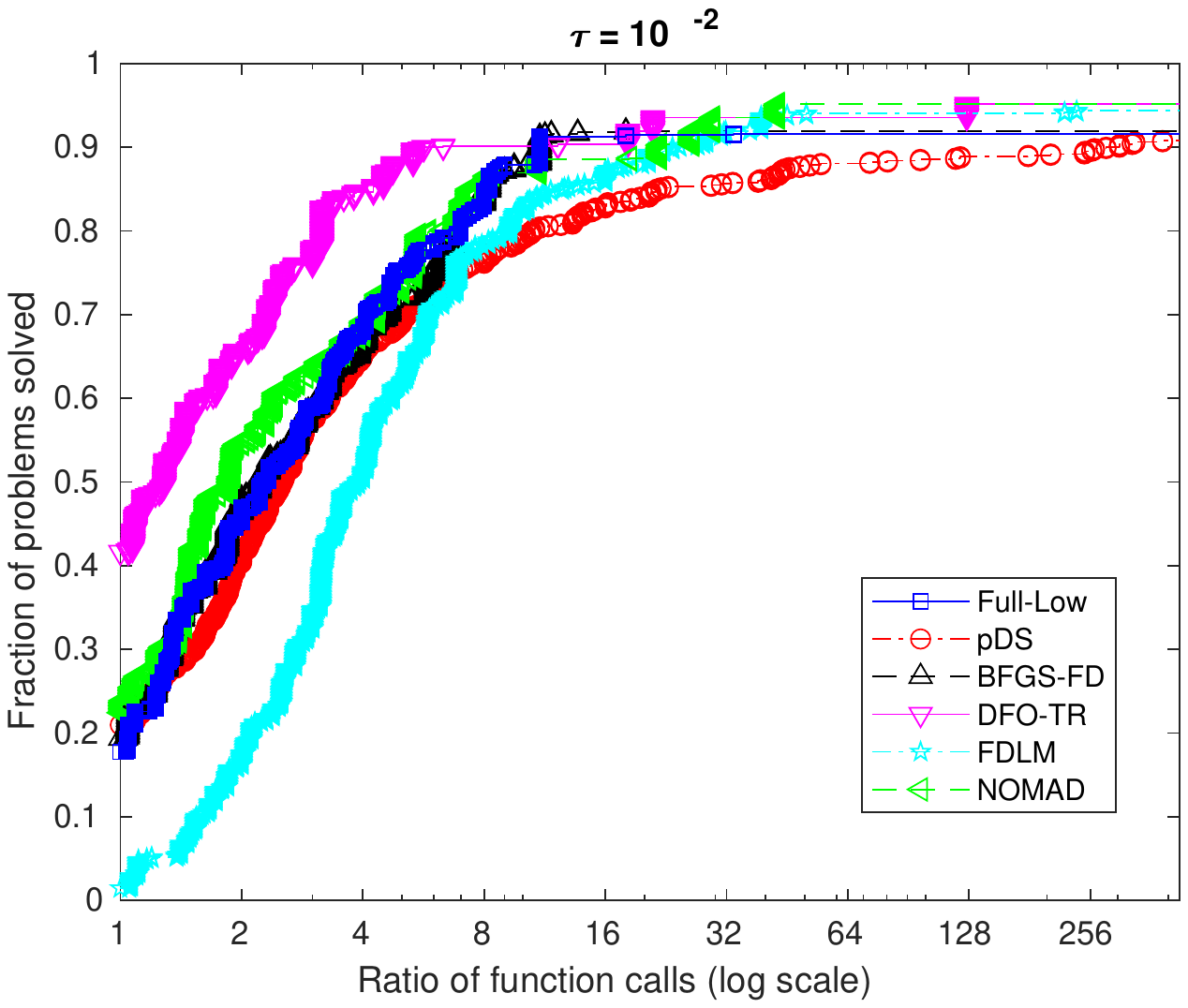}
\end{minipage}
 \hskip 5ex
\begin{minipage}[t]{0.47\textwidth}
  \includegraphics[scale=0.5]{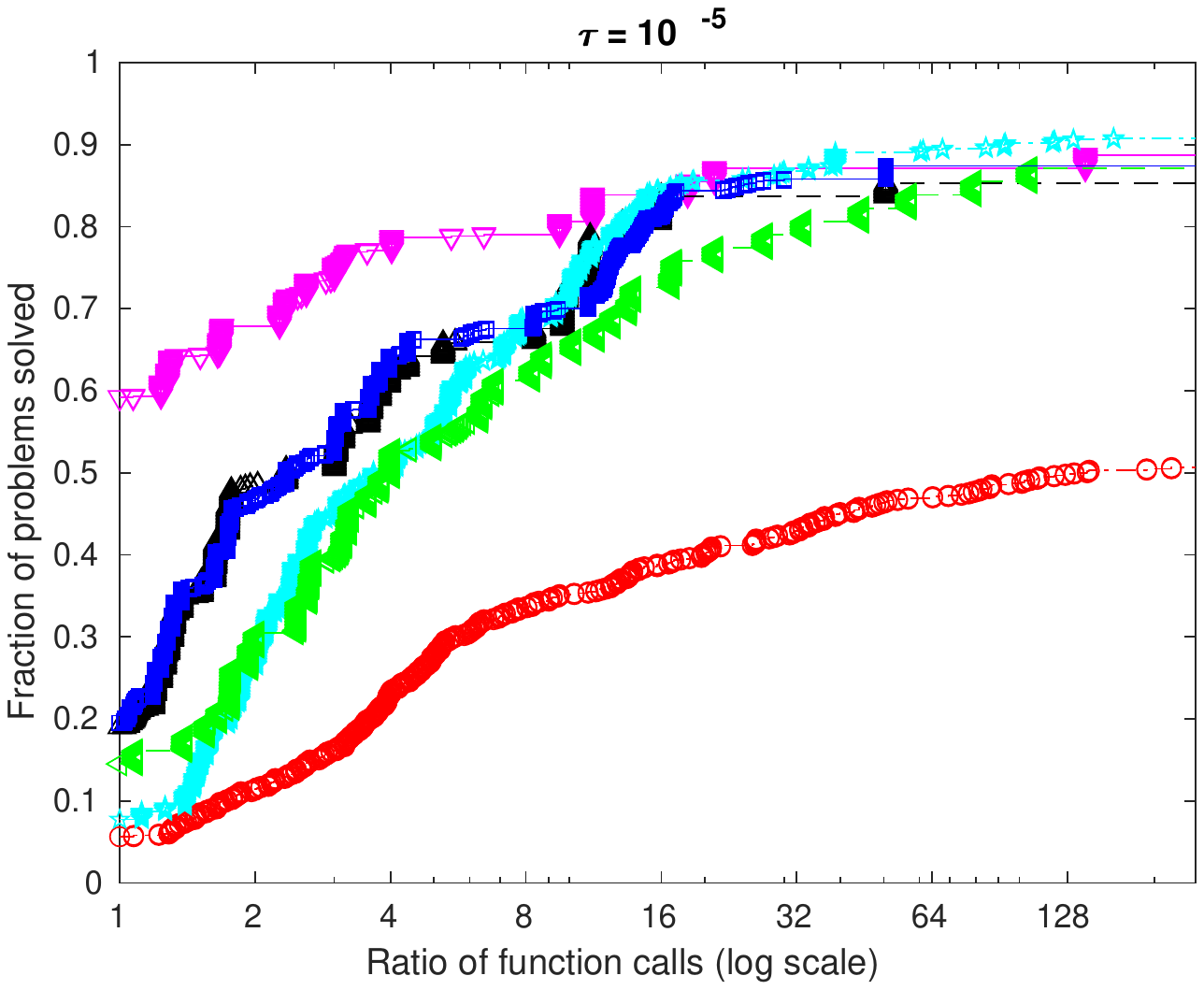}
\end{minipage}\\
\vskip -50ex
\hskip -20ex
\begin{minipage}[t]{0.47\textwidth}
  \includegraphics[scale=0.5]{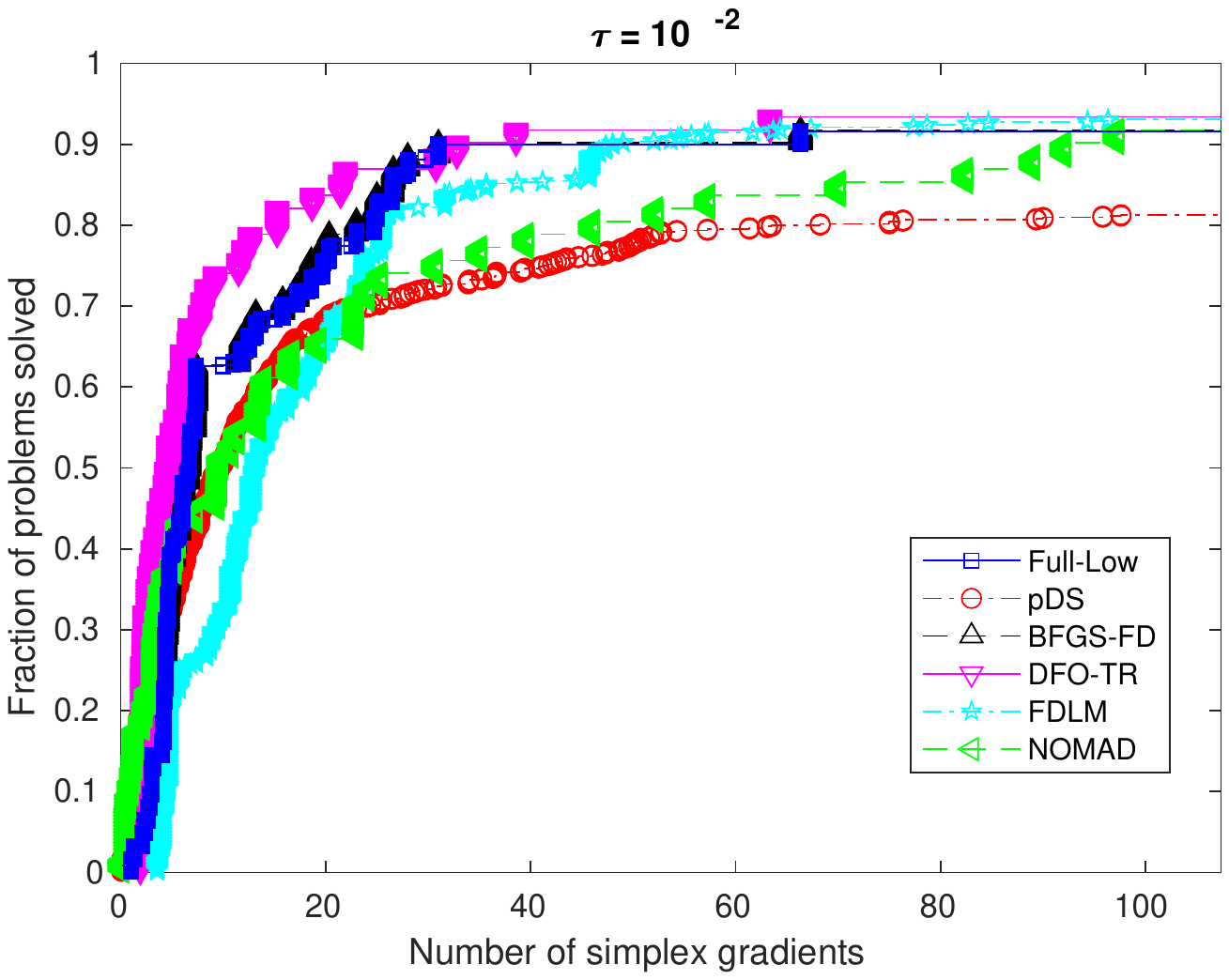}
\end{minipage}
 \hskip 5ex
\begin{minipage}[t]{0.47\textwidth}
  \includegraphics[scale=0.5]{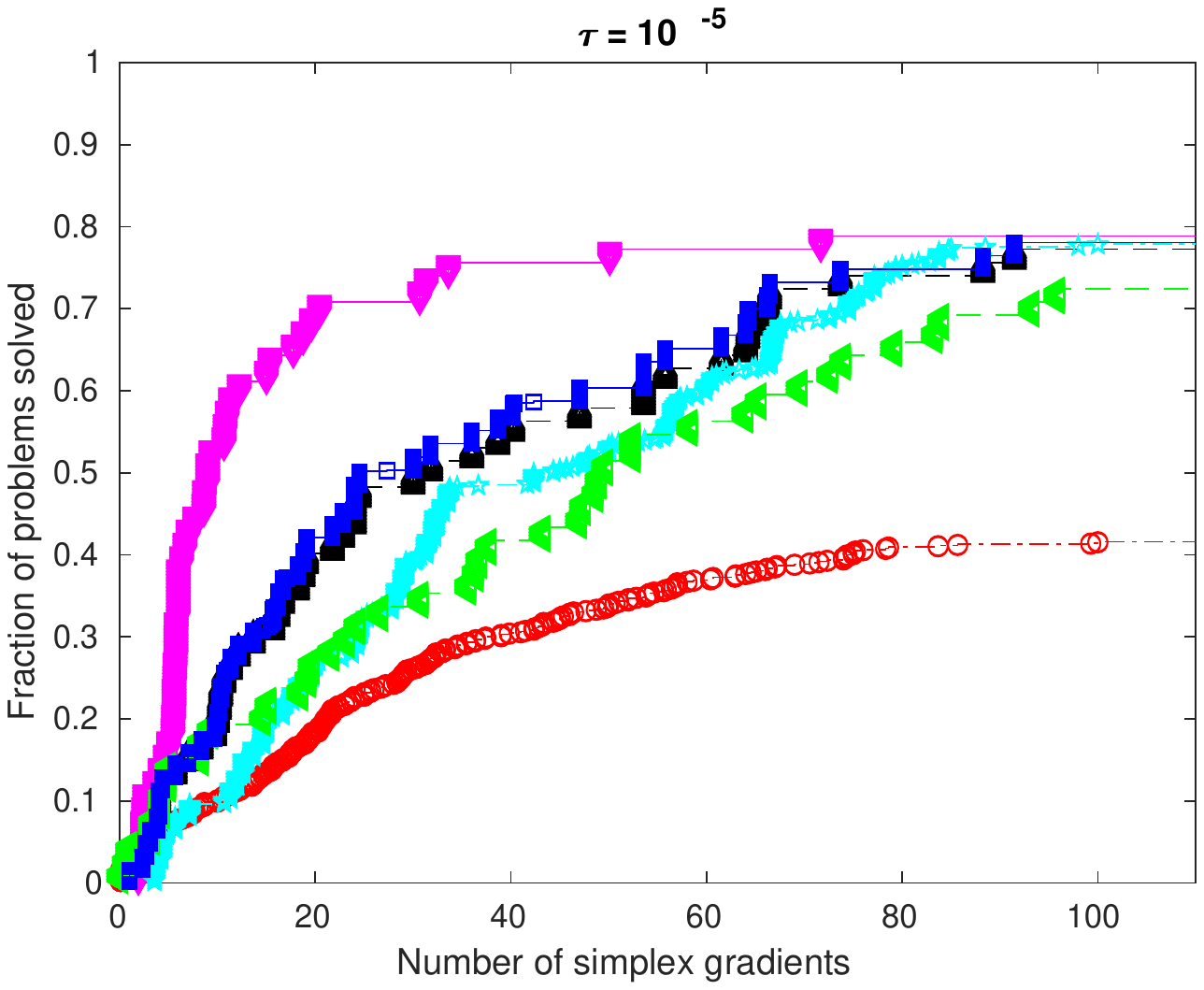}
\end{minipage}

\vskip -25ex
\caption{\centering Performance profiles (first row) and data profiles (second row) with $\tau = 10^{-2}$, $10^{-5}$ for the 6 solvers. Results for the 62 small smooth problems from~\cuter~\cite{NIMGould_DOrban_PhLToint_2015}.}
\label{fig:comp4}
\end{figure}

The \FLE{} method (blue squares) and \texttt{BFGS-FD} (black upward triangles) exhibit essentially the same performance, and this is because condition~(\ref{eq:key}) is rarely violated for smooth problems.
For low accuracy, \NOMAD{} (green leftward triangles) has a comparable performance to \FLE{} and \texttt{pDS} (red circles) does perform quite well as expected.
For the $n = 40,80$ smooth problems (see Figure~\ref{fig:scal}), the \FLE{} method appears to be both the most efficient and the most robust solver (performance similar to \texttt{BFGS-FD} as no \LowEval{} iterations seem to occur). The performance of \DFOTR{} (magenta downward triangles) deteriorates with the dimension, perhaps due to ill-conditioning of the sample set. In all the tests for smooth problems, \texttt{FDLM} (light blue stars) delivers poor efficiency but strong robustness. \NOMAD{} delivers the worst performance looking both at performance and data profiles.

\begin{figure}[H]
\vskip -25ex
\hskip -20ex
\centering
\begin{minipage}[t]{0.47\textwidth}
  \includegraphics[scale=0.5]{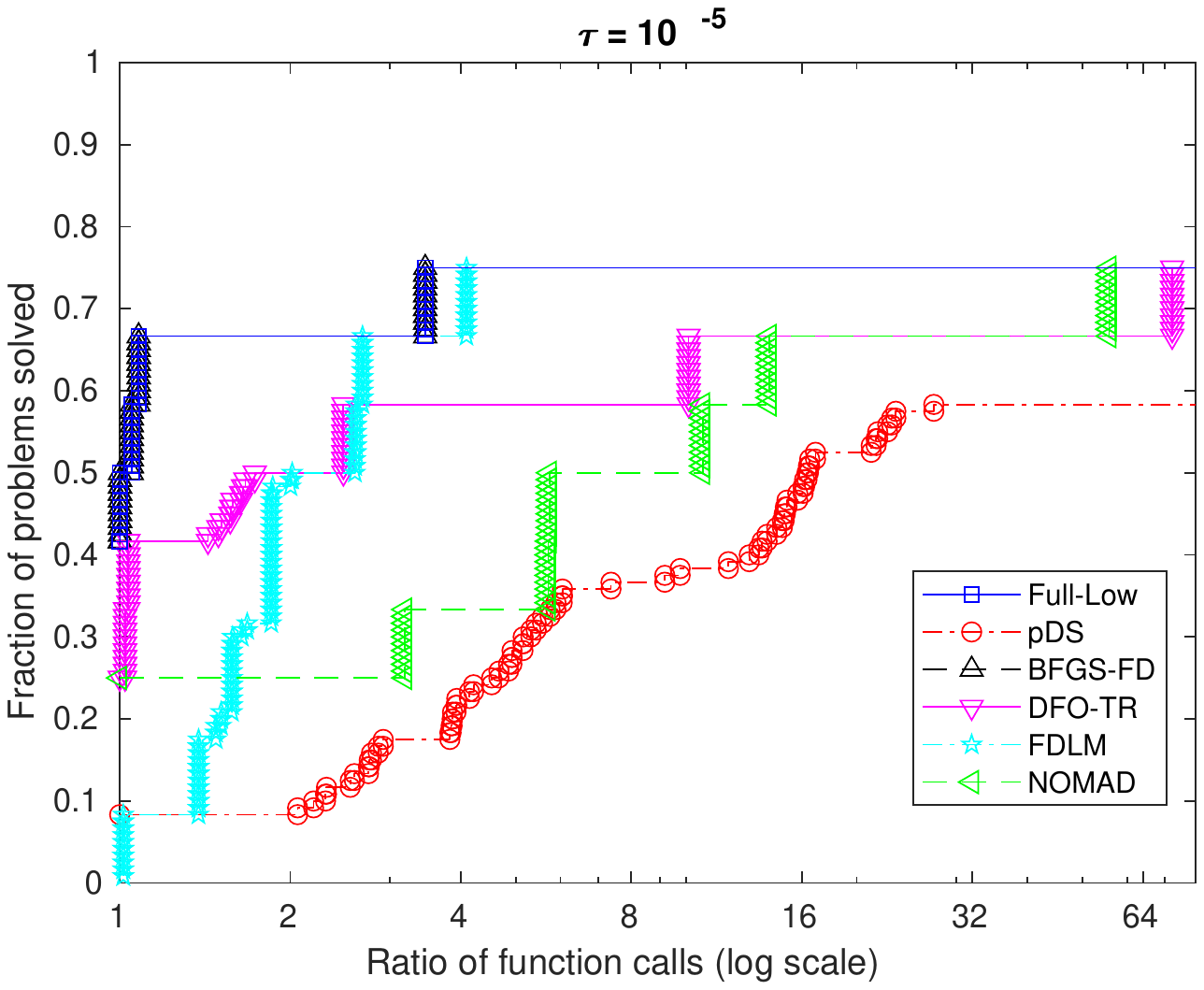}
\end{minipage}
 \hskip 5ex
\begin{minipage}[t]{0.47\textwidth}
  \includegraphics[scale=0.5]{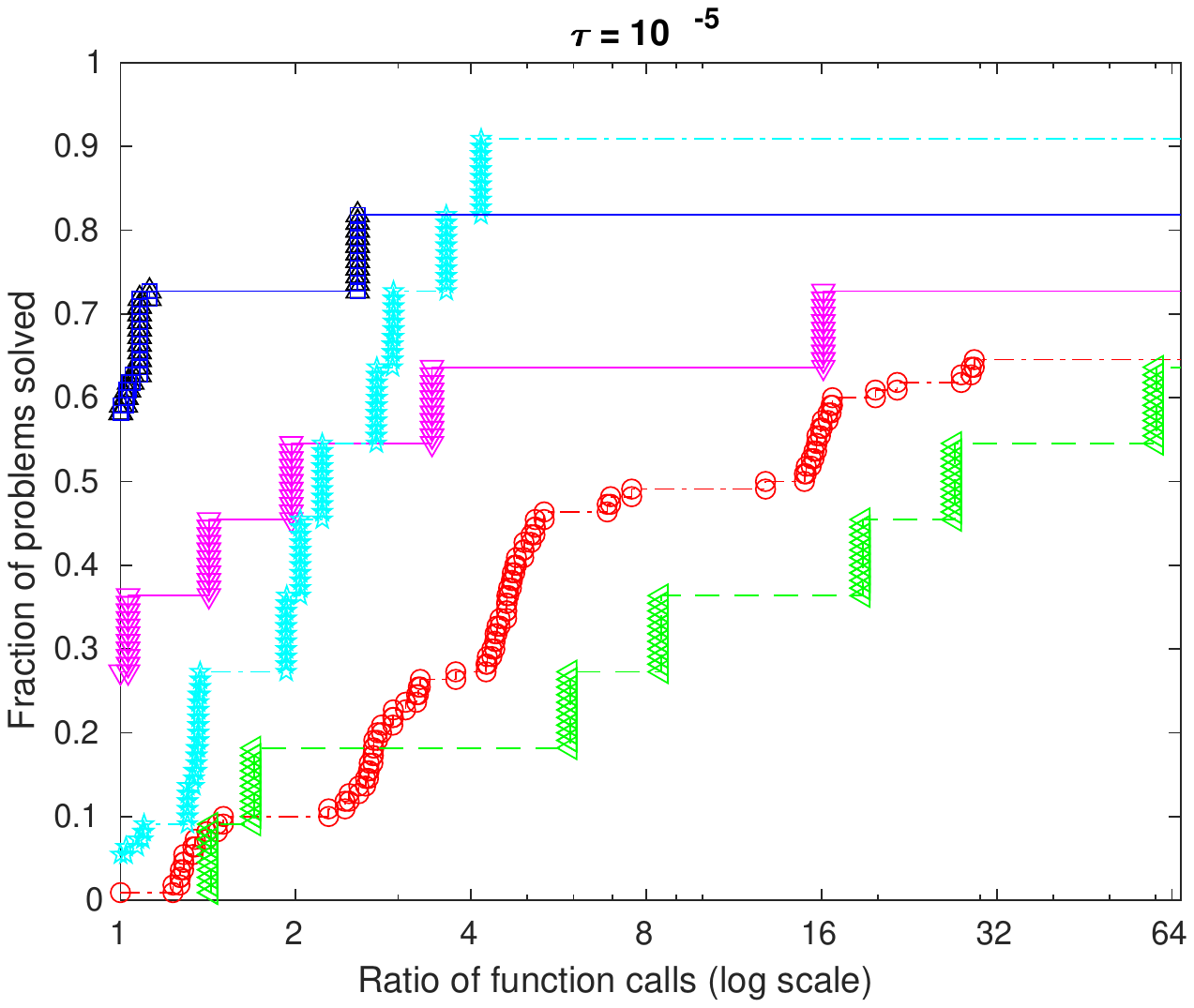}
\end{minipage}
\vskip -50ex
\hskip -21ex
\centering
\begin{minipage}[t]{0.47\textwidth}
  \includegraphics[scale=0.5]{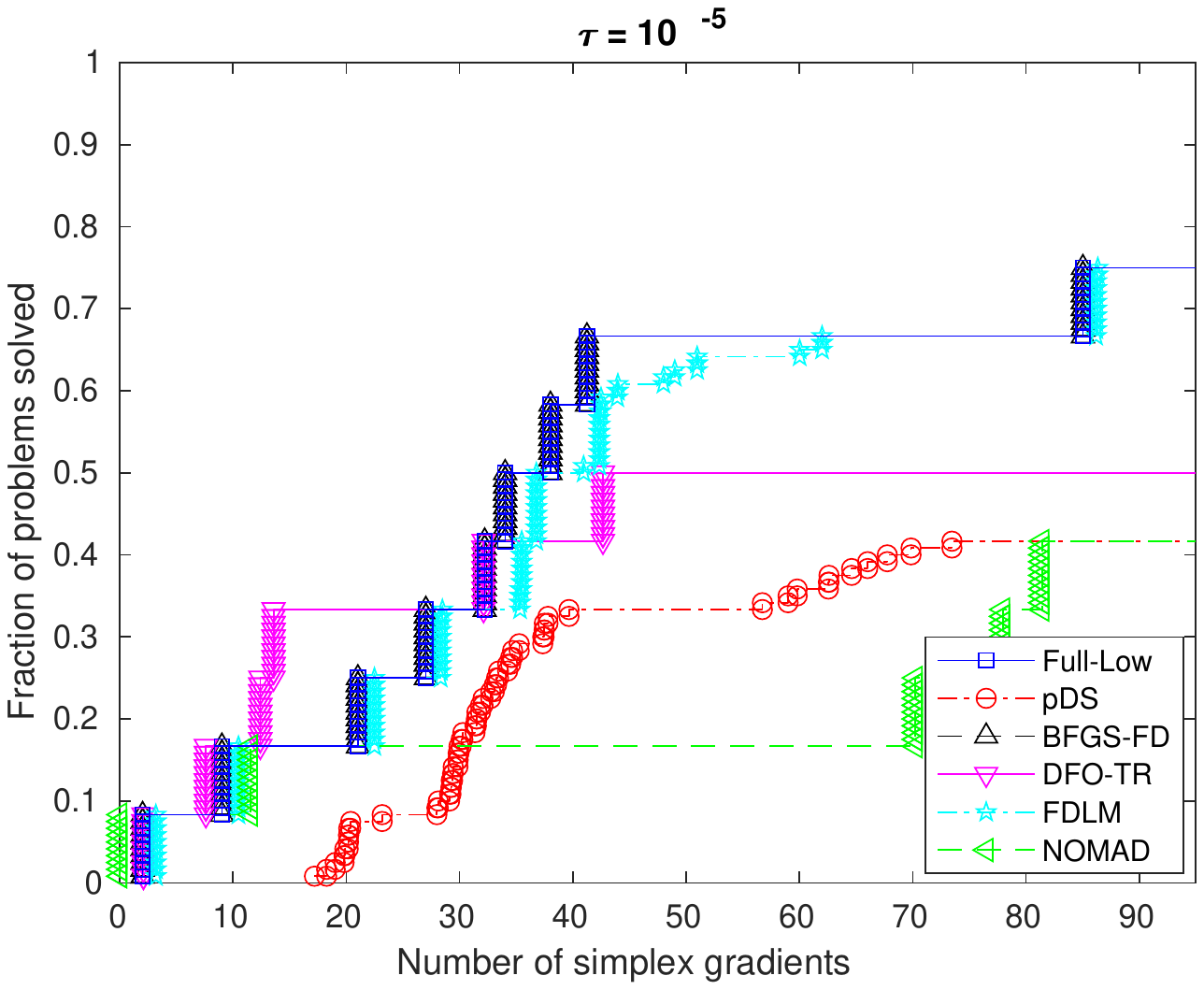}
\end{minipage}
 \hskip 5ex
\begin{minipage}[t]{0.47\textwidth}
  \includegraphics[scale=0.5]{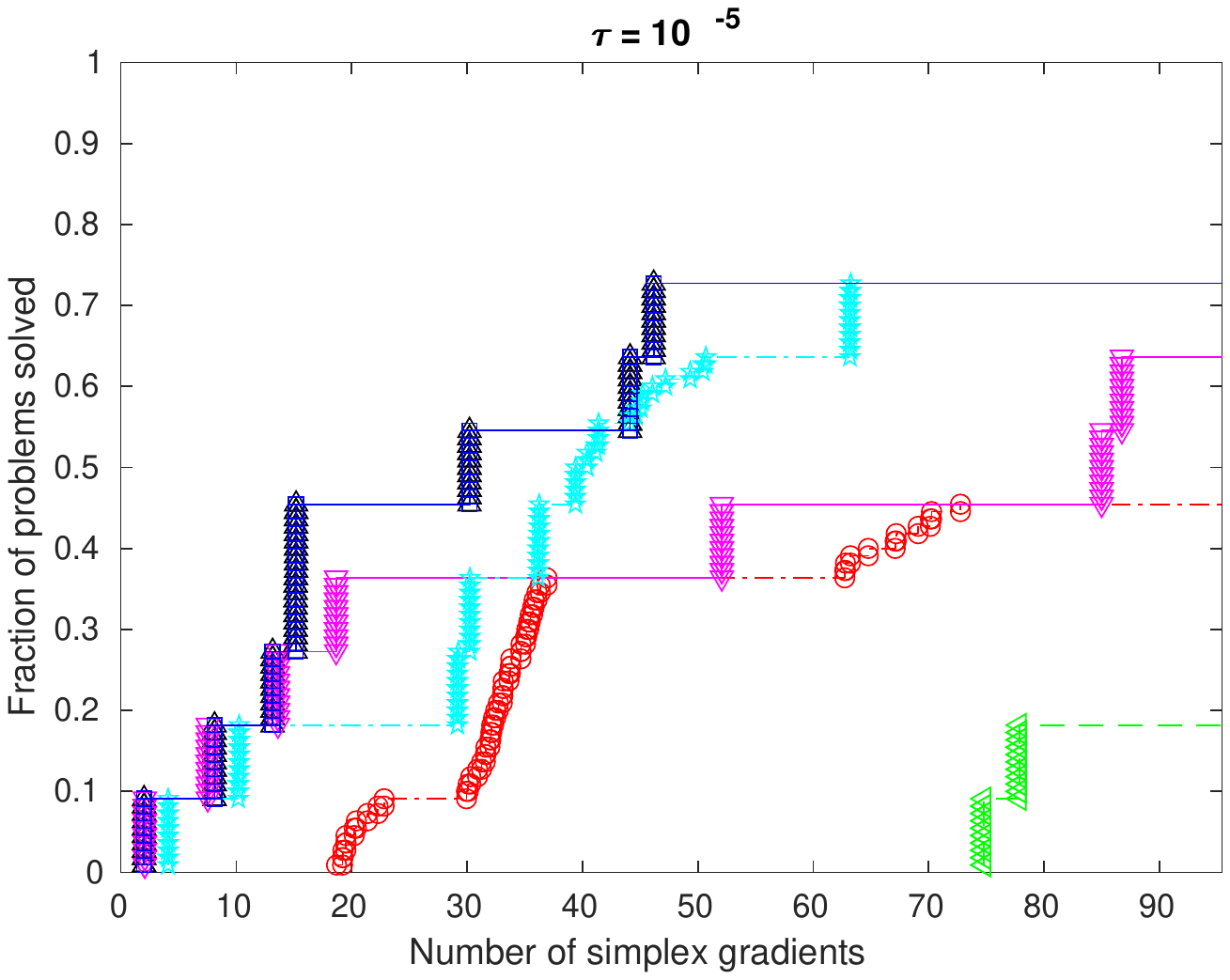}
\end{minipage}
\vskip -25ex
\caption{\centering Performance profiles (first row) and data profiles (second row) with $\tau = 10^{-5}$
for the 6 solvers. Results for the 12 not-so-small smooth problems from~\cuter~\cite{NIMGould_DOrban_PhLToint_2015} ($n=40$ on left and $n=80$ on the right).}
\label{fig:scal}
\end{figure}

\subsection{Non-smooth problems}

In Figure~\ref{fig:nondiff}, we present the results on the piecewise smooth problems. One can see that the \FLE{} curve is above all, followed by the BFGS-FD, then \NOMAD{}. On this set of problems, one can see that \FLE{} performs better that \texttt{BFGS-FD}, due to the presence of non-smoothness. In fact, the use of \LowEval{} iterations has significantly improved the performance of \texttt{BFGS-FD}. Note that \texttt{pDS} and \DFOTR{} have the worst performance, but \texttt{FDLM} is not much better than these two. Note that none of these methods were designed to handle problems with non-smoothness. Looking at data profiles, \FLE{} solves the largest number of problems for both low and high accuracies for small budgets.

\begin{figure}[H]
\vskip -25ex
\hskip -20ex
\centering
\begin{minipage}[t]{0.47\textwidth}
  \includegraphics[scale=0.5]{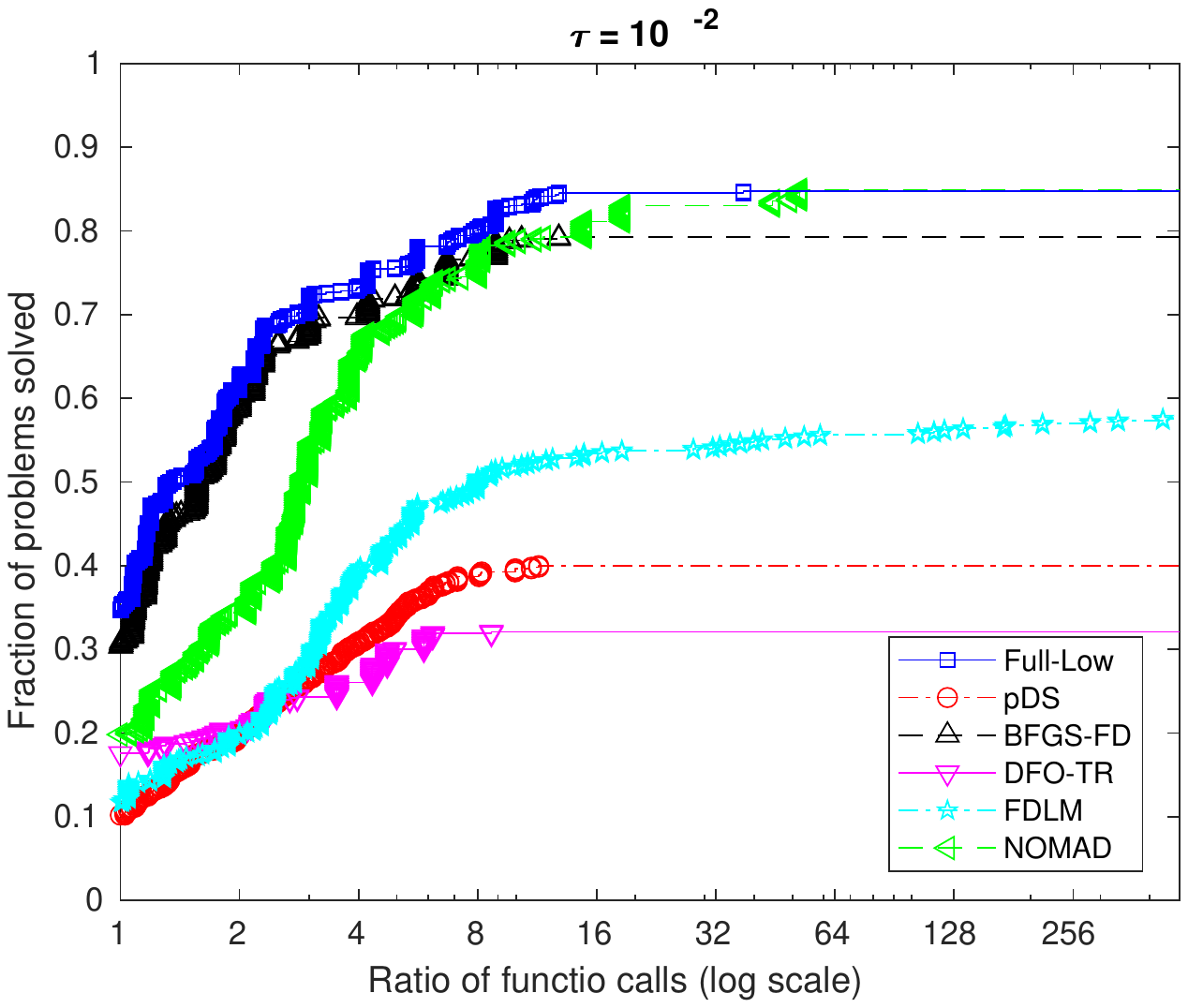}
\end{minipage}
 \hskip 5ex
\begin{minipage}[t]{0.47\textwidth}
  \includegraphics[scale=0.5]{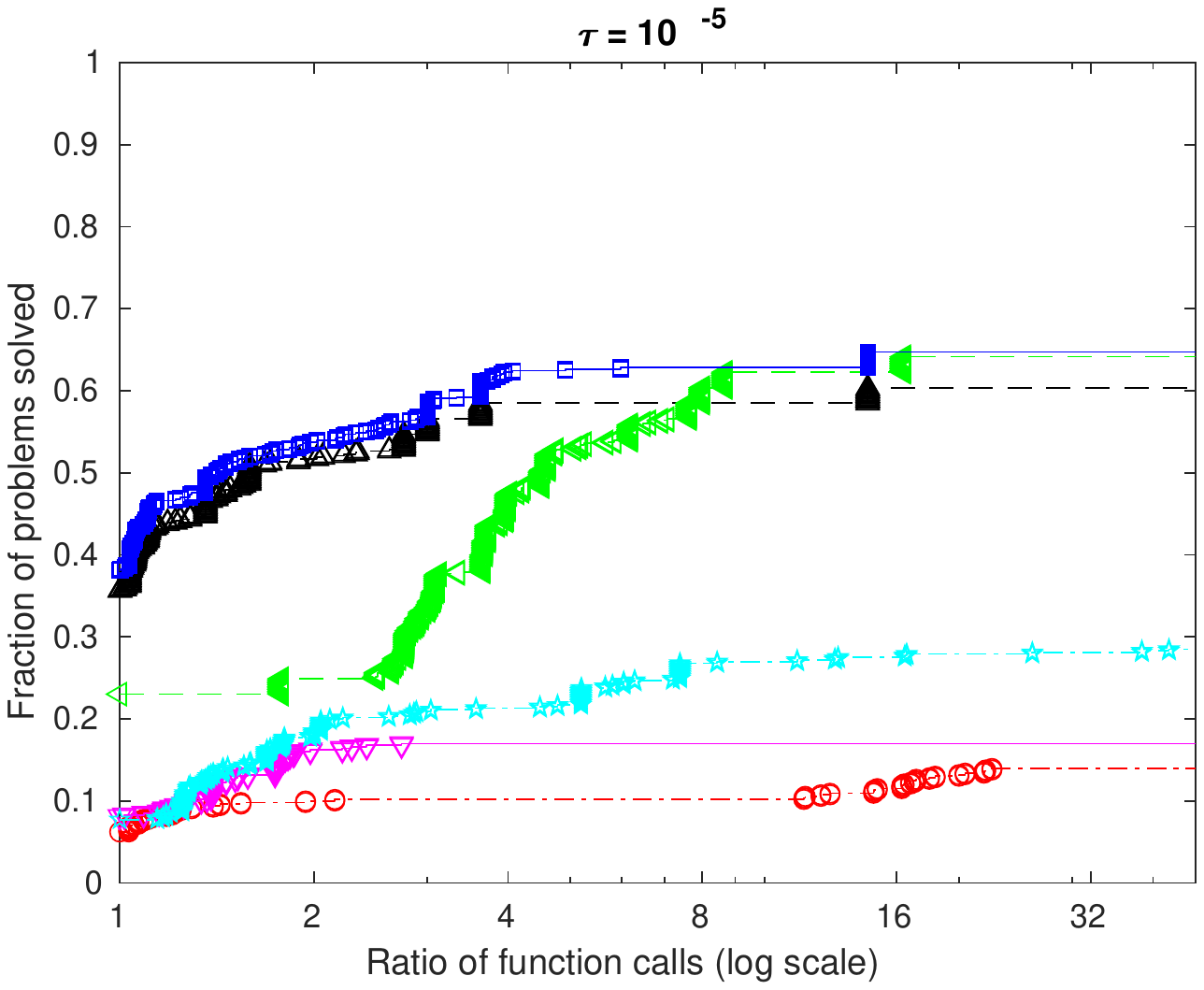}
\end{minipage}
\vskip -50ex
\hskip -20ex
\centering
\begin{minipage}[t]{0.47\textwidth}
  \includegraphics[scale=0.5]{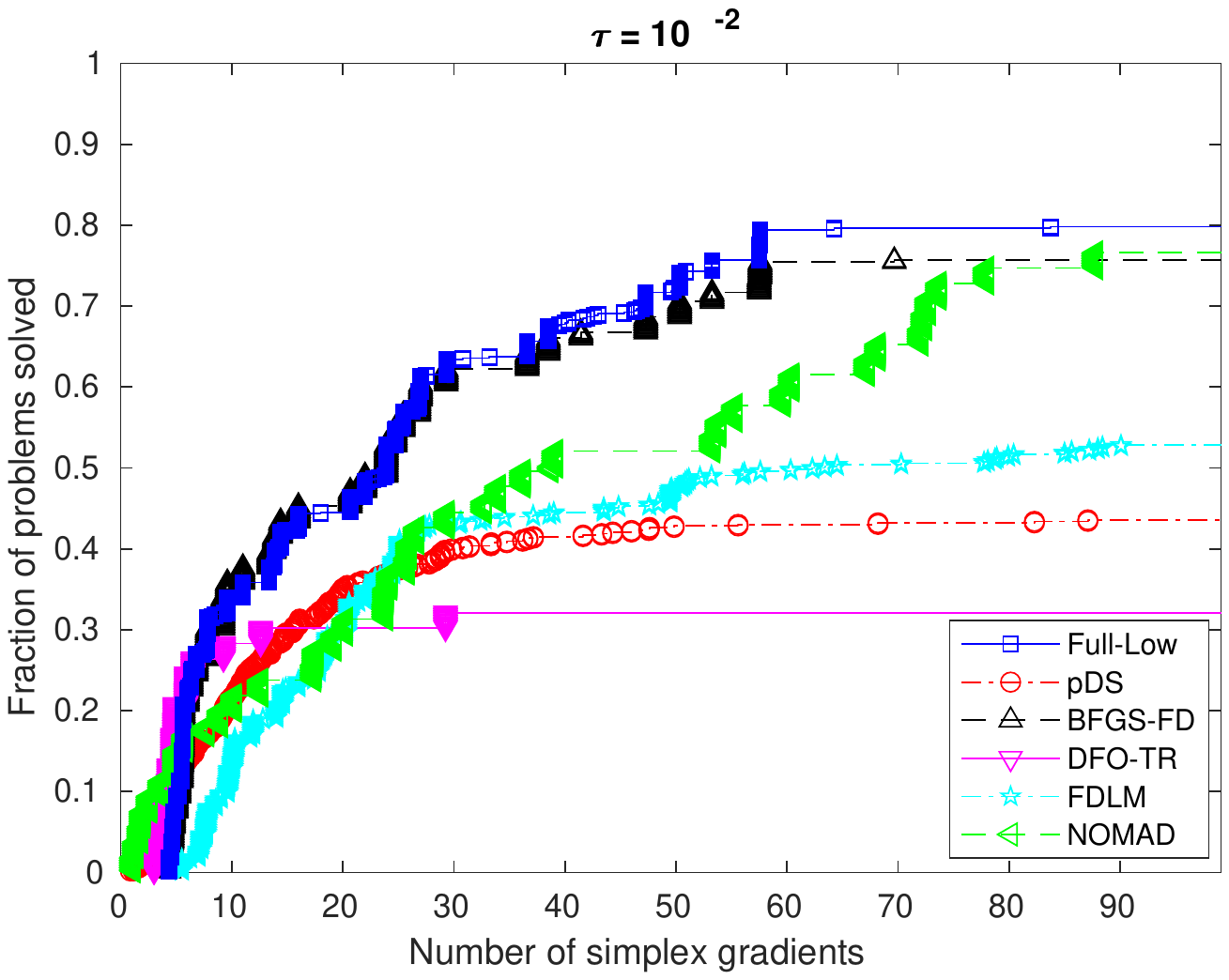}
\end{minipage}
 \hskip 5ex
\begin{minipage}[t]{0.47\textwidth}
  \includegraphics[scale=0.5]{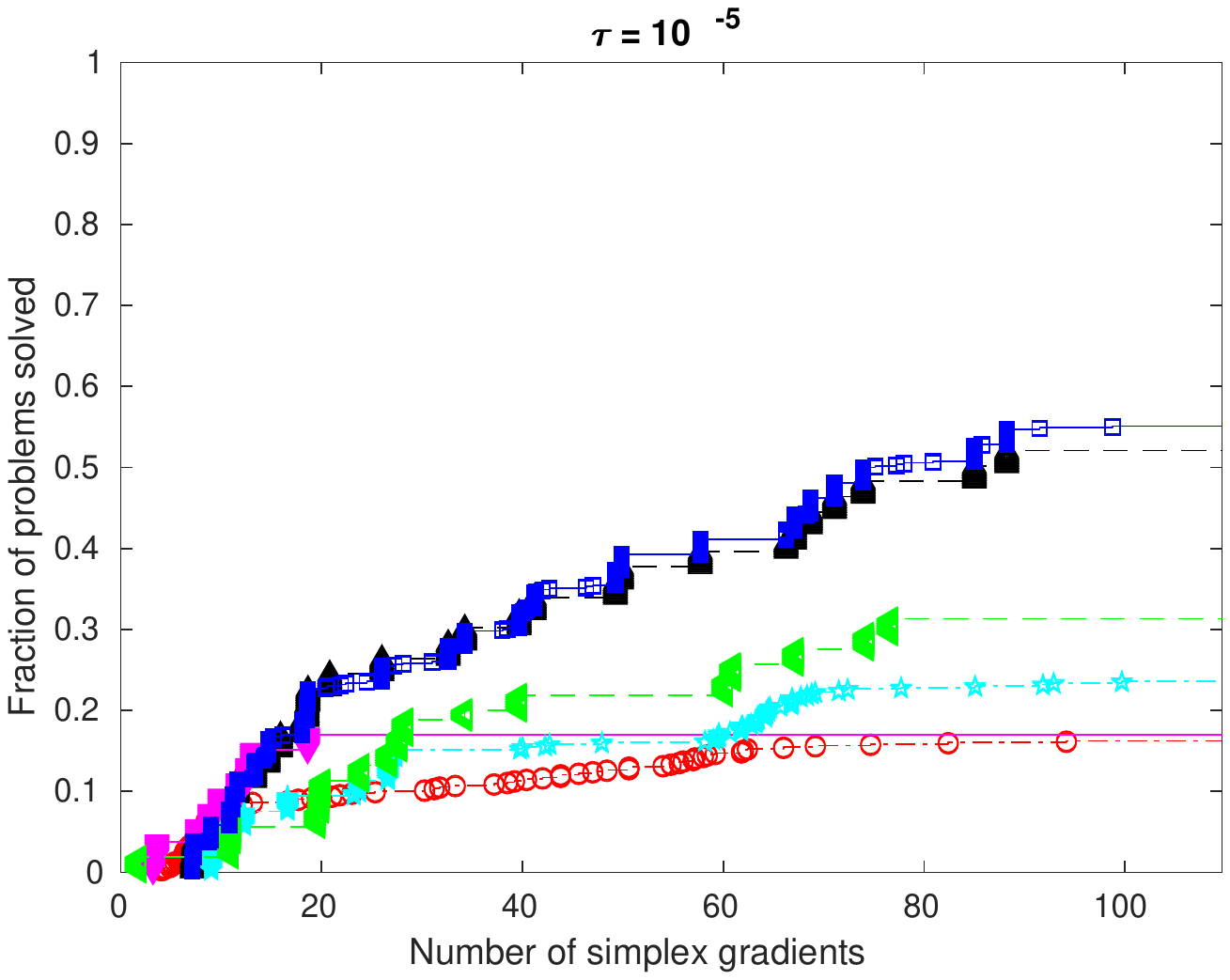}
\end{minipage}
\vskip -28ex
\caption{\centering Performance profiles (first row) and data profiles (second row) with $\tau = 10^{-2}$, $10^{-5}$ for the 6 solvers. Results for the piecewise smooth problems in~\cite{JJMore_SMWild_2009}.}
\centering
\label{fig:nondiff}
\end{figure}

\subsection{Noisy problems}

We now compare the $6$ solvers on the noisy problems described in Section~\ref{sec:problem_details}. The noise level~$\epsilon_f$ is chosen to be equal to $10^{-3}$ for both deterministic and stochastic cases.

\begin{figure}[H]
\vskip -25ex
\hskip -20ex
\centering
\captionsetup{justification=centering}
\begin{minipage}[t]{0.47\textwidth}
  \includegraphics[scale=0.5]{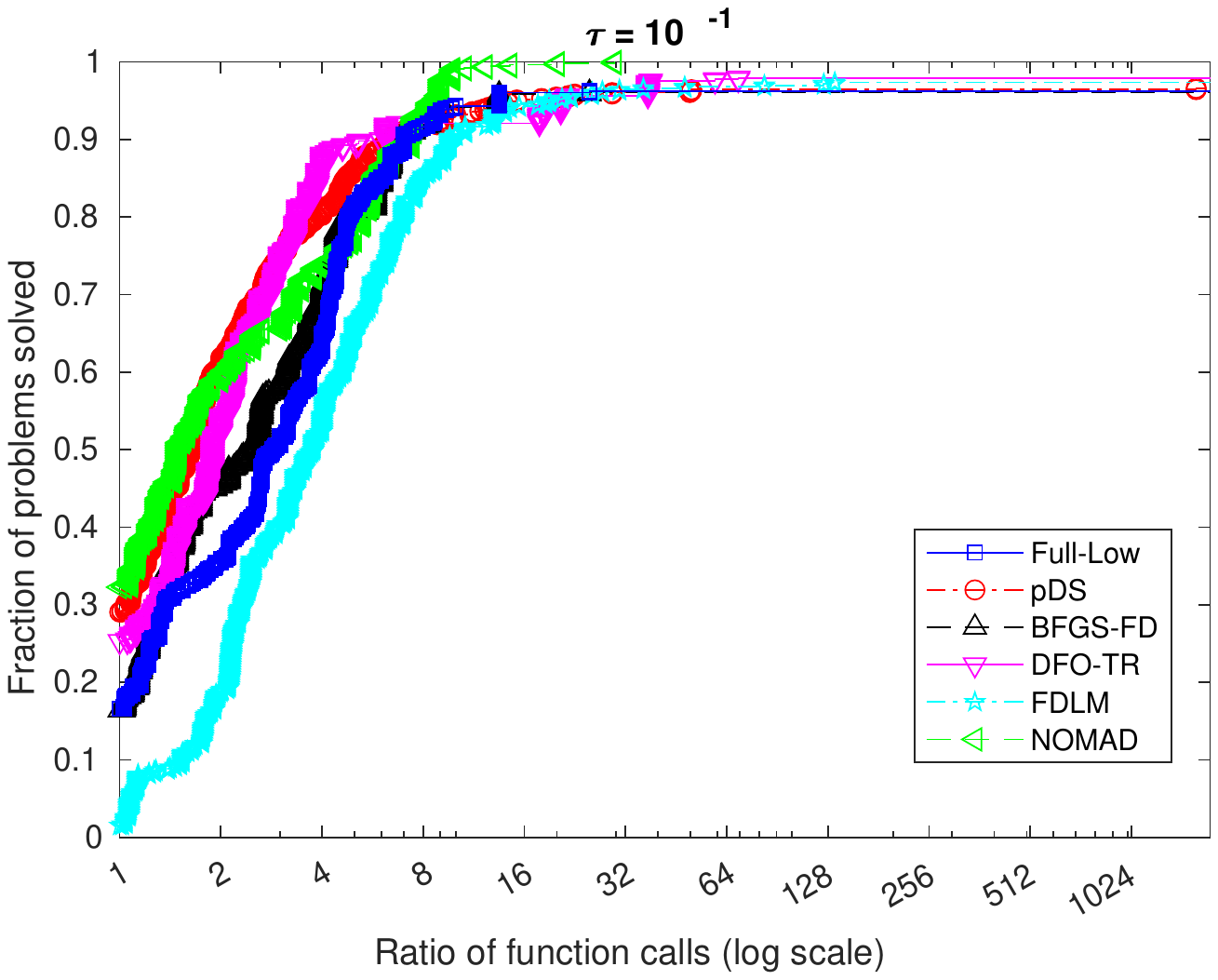}
\end{minipage}
 \hskip 5ex
\begin{minipage}[t]{0.47\textwidth}
  \includegraphics[scale=0.5]{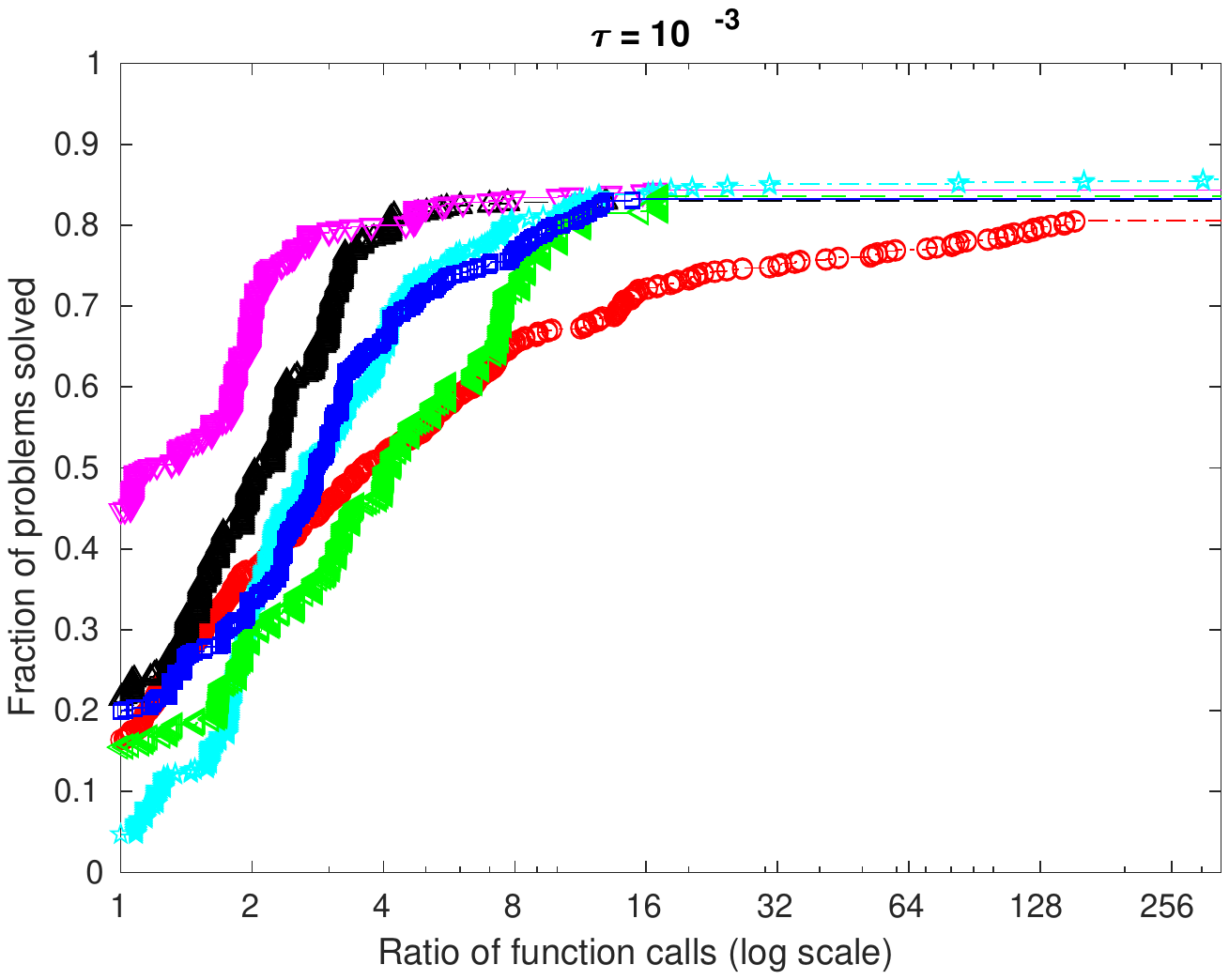}
\end{minipage}
\vskip -50ex
\hskip -20ex
\centering
\captionsetup{justification=centering}
\begin{minipage}[t]{0.47\textwidth}
  \includegraphics[scale=0.5]{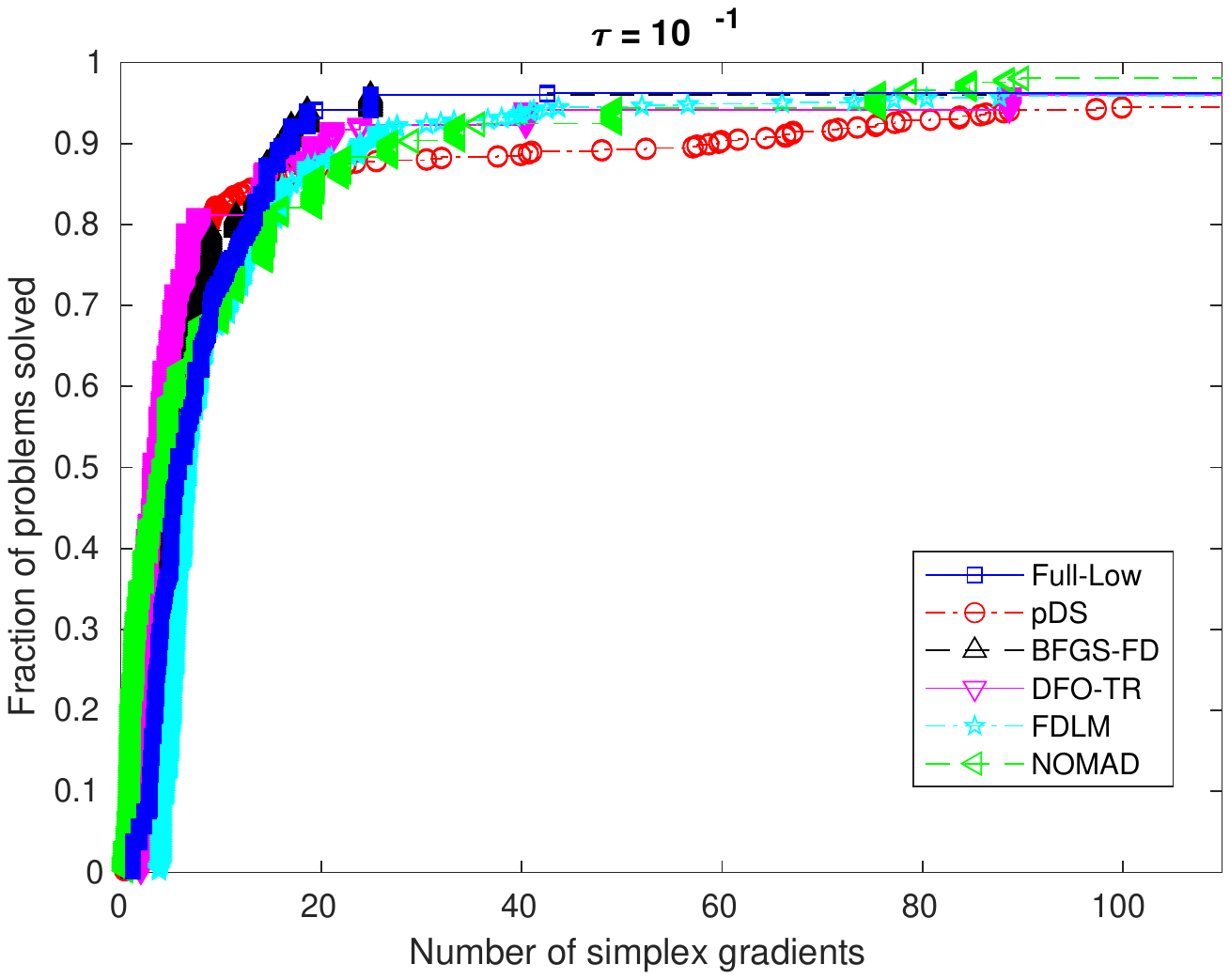}
\end{minipage}
 \hskip 5ex
\begin{minipage}[t]{0.47\textwidth}
  \includegraphics[scale=0.5]{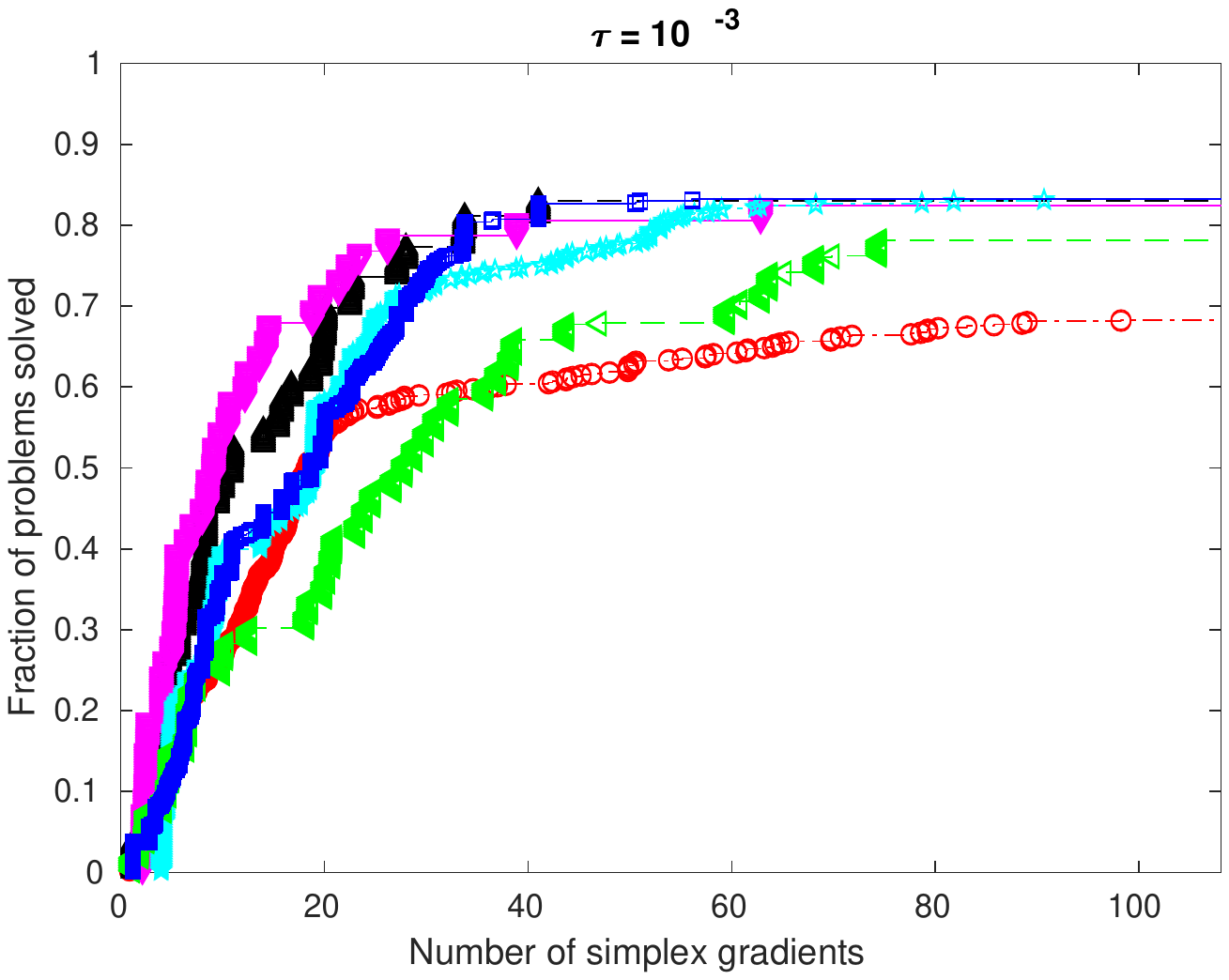}
\end{minipage}
\vskip -25ex
    \caption{Performance profiles (first row) and data profiles (second row) with $\tau = 10^{-1}, 10^{-3}$ of the 6 solvers.  Results for the additive deterministic noise problems (modified from~\cite{JJMore_SMWild_2009}).}
    \centering
\label{fig:comp4-deteradd}
\end{figure}

From Figure \ref{fig:comp4-deteradd}, one can see that for additive deterministic noise the best performance is the one by \DFOTR{}, which is not surprising. For these problems, it is still the case that
\FLE{} has the same efficiency and robustness as \texttt{BFGS-FD}, due to the fact that \texttt{BFGS-FD} (i.e., always doing \FullEval{} iterations) has a superior performance compared to \texttt{pDS} (i.e., always doing \LowEval{} iterations).
In the additive stochastic case (Figure~\ref{fig:comp4-noisyadd}), \NOMAD{} is clearly the best in terms of efficiency and robustness. Even though \texttt{BFGS-FD} is the worst, \FLE{} is still robust compared to the remaining methods due to the incorporation of \texttt{pDS} in the \LowEval{} iterations.

\begin{figure}[H]
\vskip -30ex
\hskip -20ex
\centering
\captionsetup{justification=centering}
\begin{minipage}[t]{0.47\textwidth}
  \includegraphics[scale=0.5]{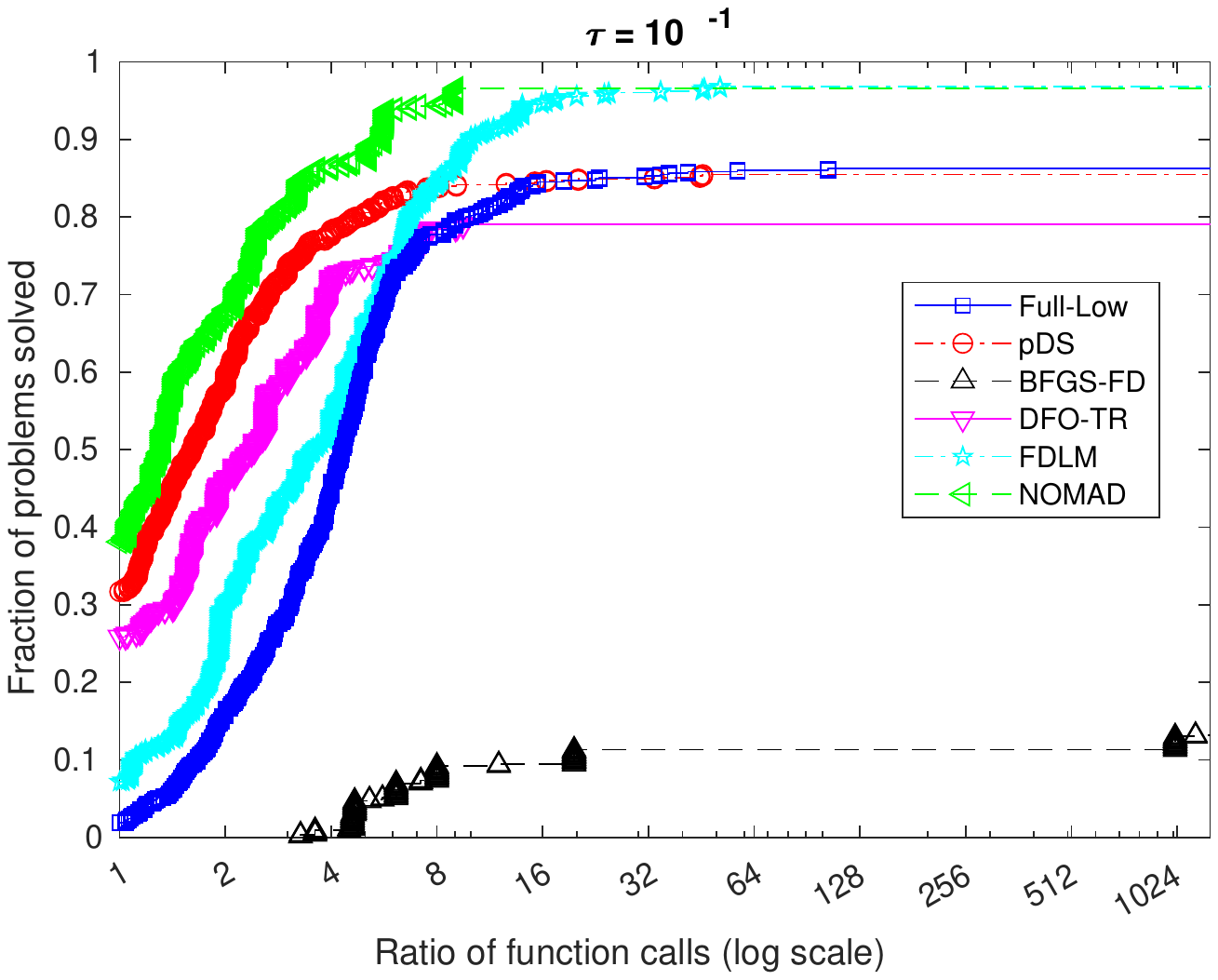}
\end{minipage}
 \hskip 5ex
\begin{minipage}[t]{0.47\textwidth}
  \includegraphics[scale=0.5]{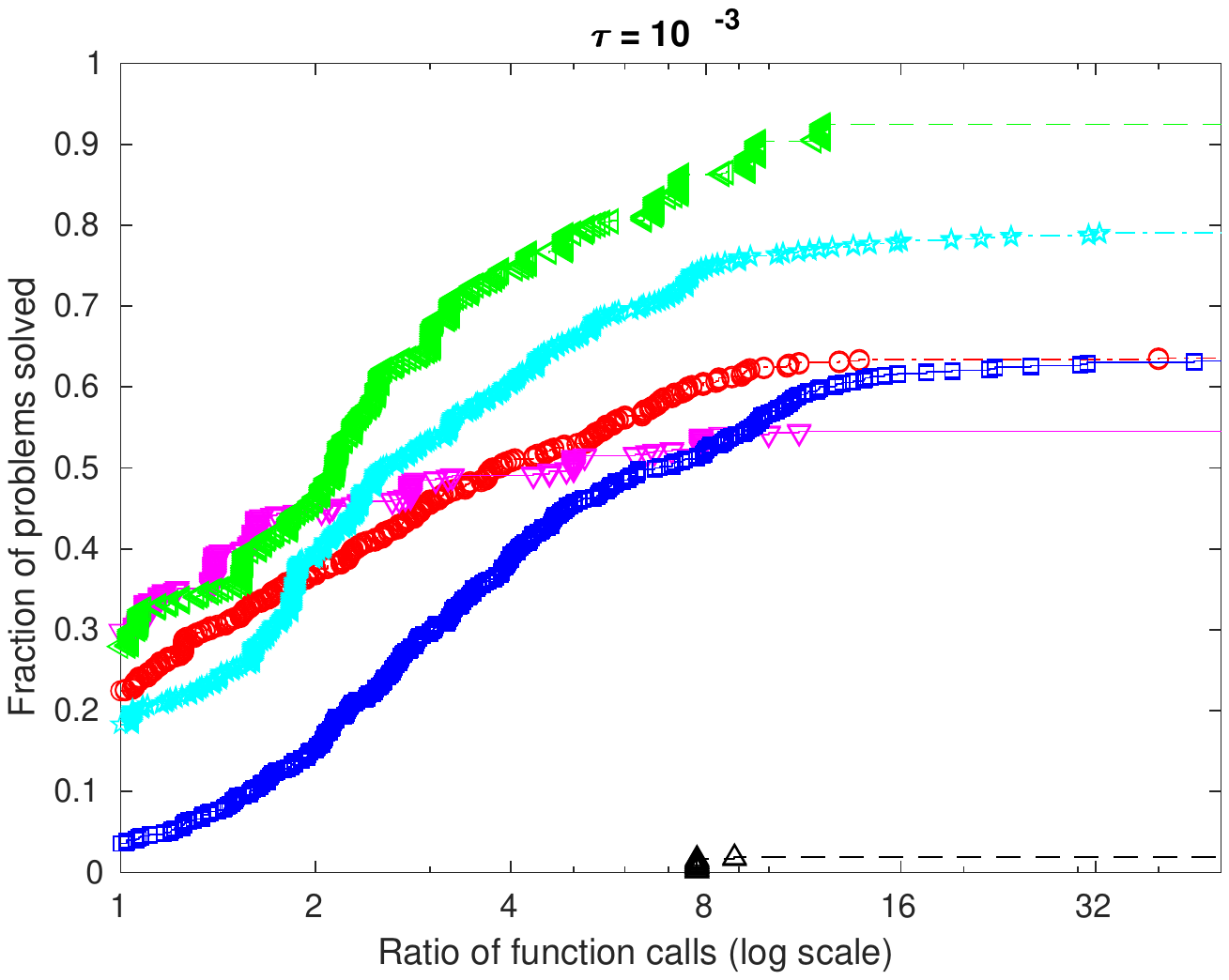}
\end{minipage}
\vskip -50ex
\hskip -20ex
\centering
\captionsetup{justification=centering}
\begin{minipage}[t]{0.47\textwidth}
  \includegraphics[scale=0.5]{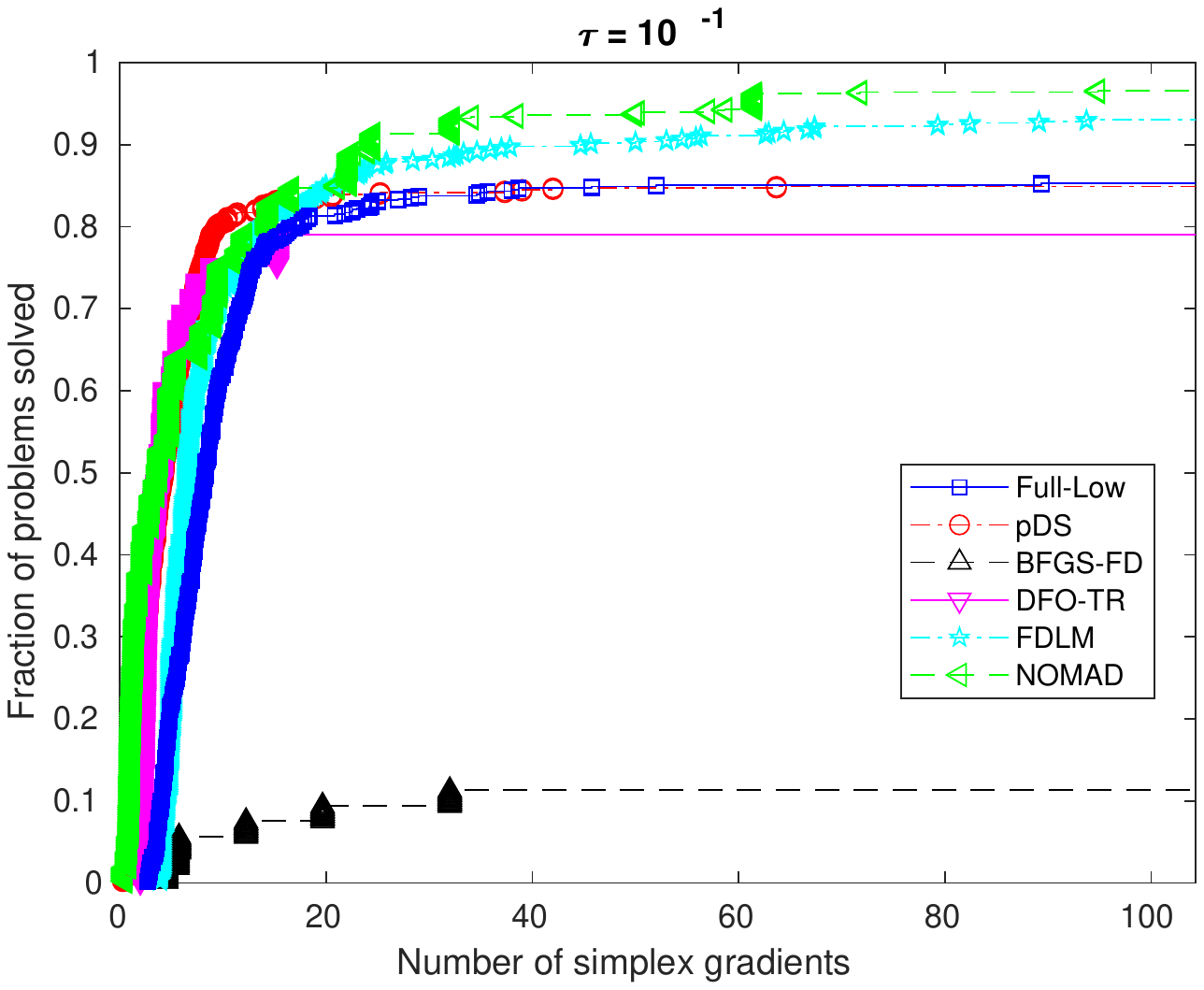}
\end{minipage}
 \hskip 5ex
\begin{minipage}[t]{0.47\textwidth}
  \includegraphics[scale=0.5]{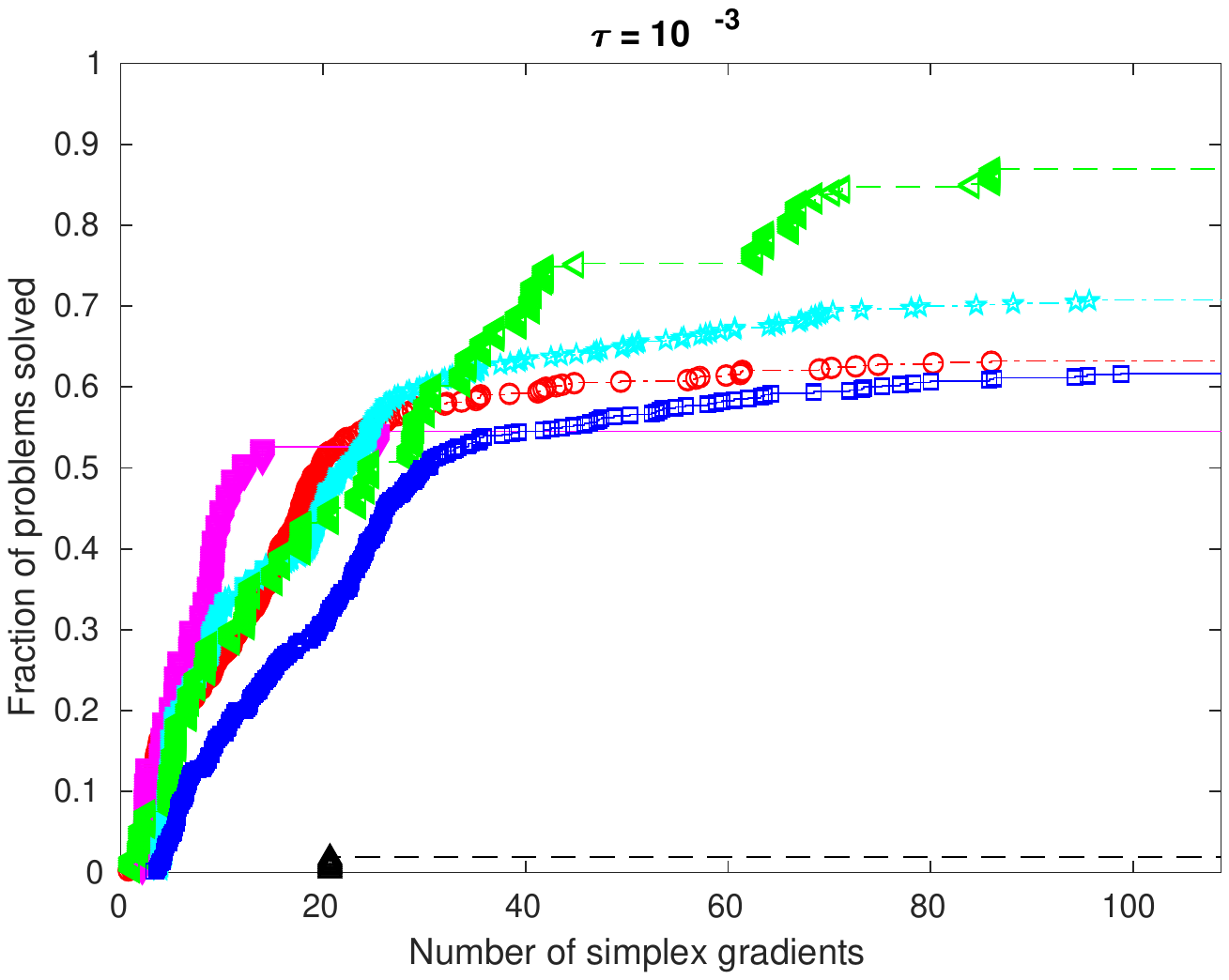}
\end{minipage}
\vskip -25ex
    \caption{Performance profiles (first row) and data profiles (second row) with $\tau = 10^{-1}, 10^{-3}$ of the 6 solvers.  Results for the additive stochastic noise problems (modified from~\cite{JJMore_SMWild_2009}).}
    \centering
\label{fig:comp4-noisyadd}
\end{figure}

In the multiplicative deterministic case (see Figure~\ref{fig:comp4-determult}), we can see that \NOMAD{} is robust regardless of the accuracy. \FLE{} is a little more efficient for high accuracy. \FDLM{} performs better than in the additive deterministic case. We can see that for high accuracy, \FLE{} performs better than just doing \FullEval{} or \LowEval{} iterations. In particular, by looking at data profiles, it solves a higher percentage of problems within small budgets.

\begin{figure}[H]
\vskip -30ex
\hskip -20ex
\centering
\captionsetup{justification=centering}
\begin{minipage}[t]{0.47\textwidth}
  \includegraphics[scale=0.5]{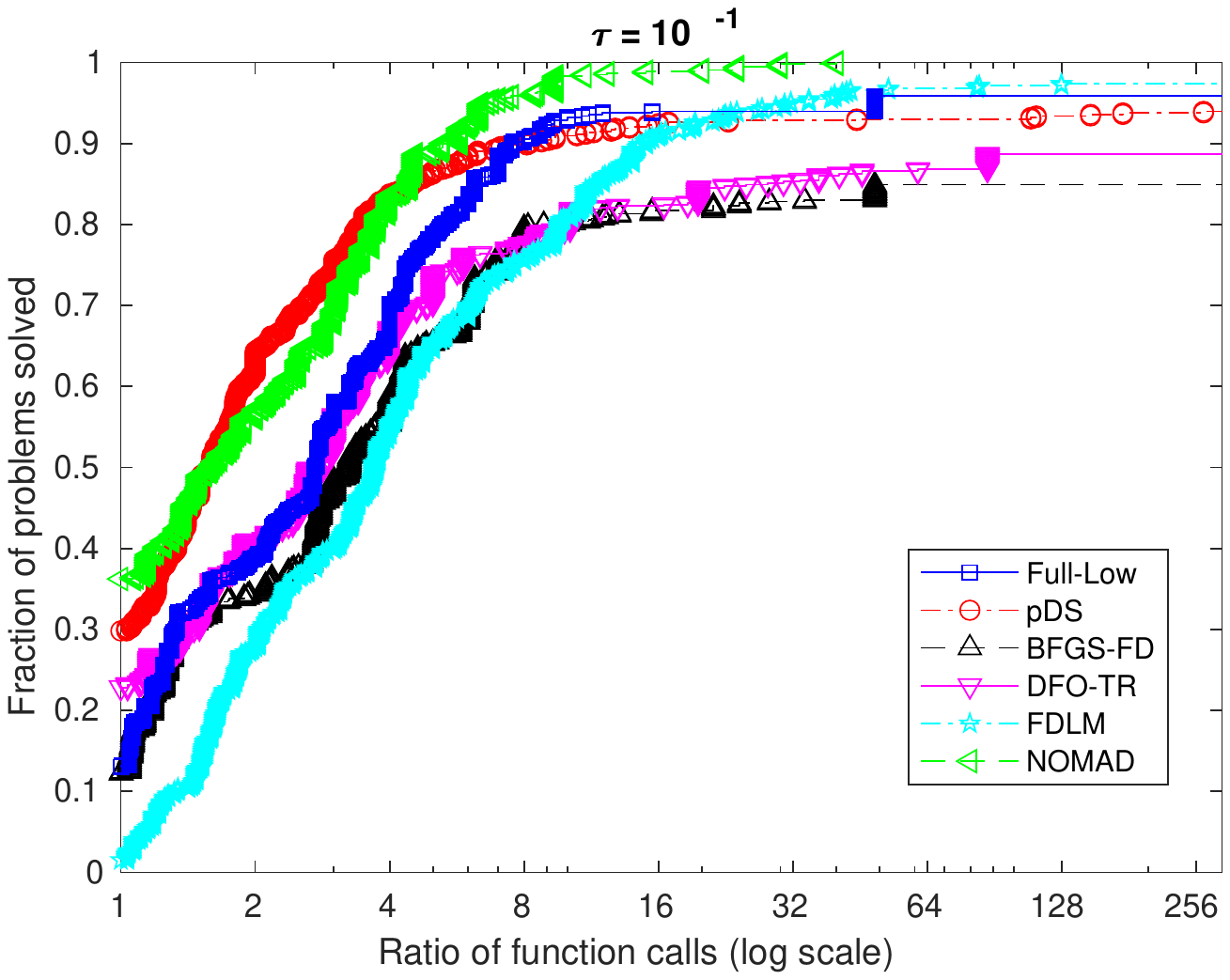}
\end{minipage}
 \hskip 5ex
\begin{minipage}[t]{0.47\textwidth}
  \includegraphics[scale=0.5]{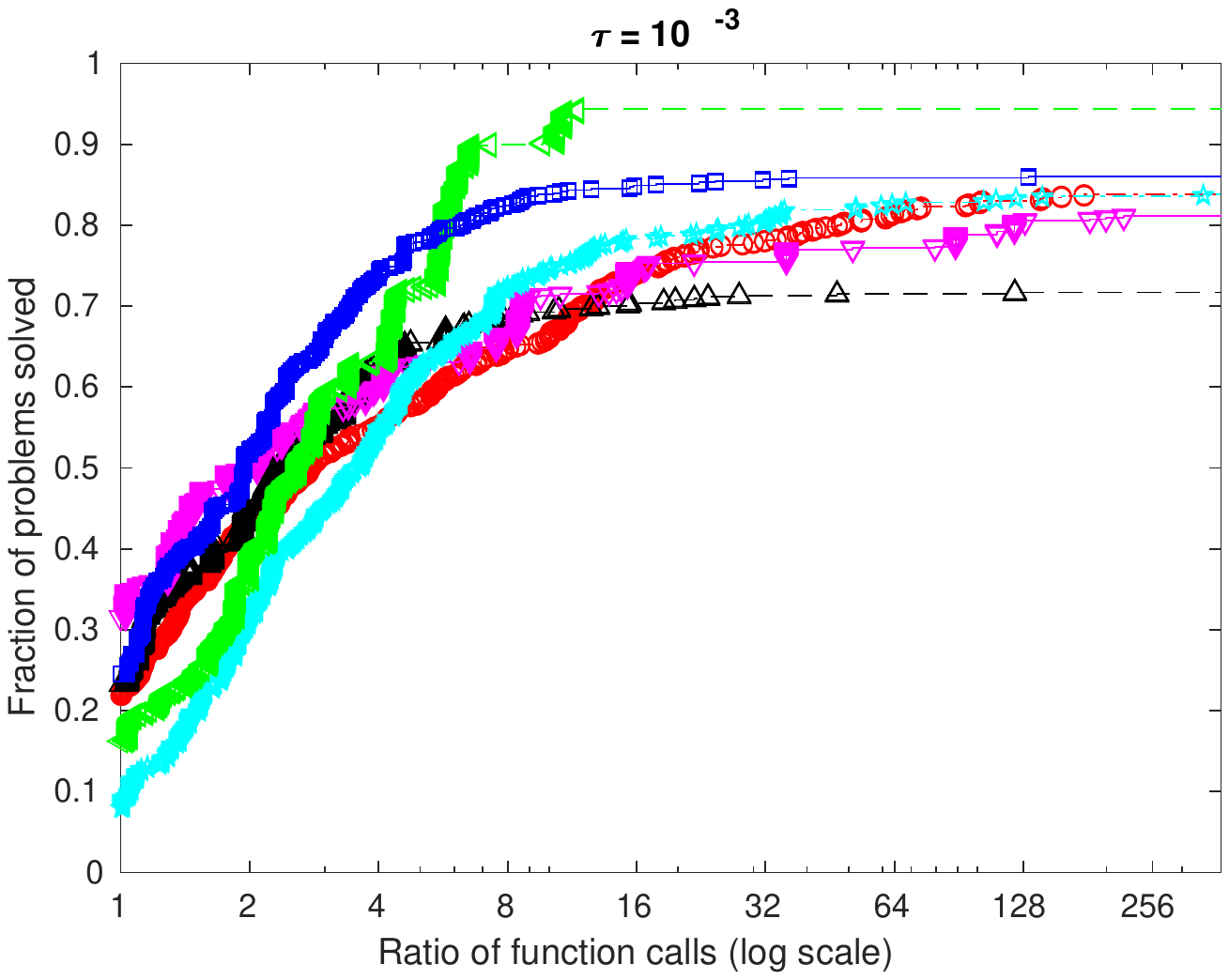}
\end{minipage}
\vskip -50ex
\hskip -20ex
\centering
\captionsetup{justification=centering}
\begin{minipage}[t]{0.47\textwidth}
  \includegraphics[scale=0.5]{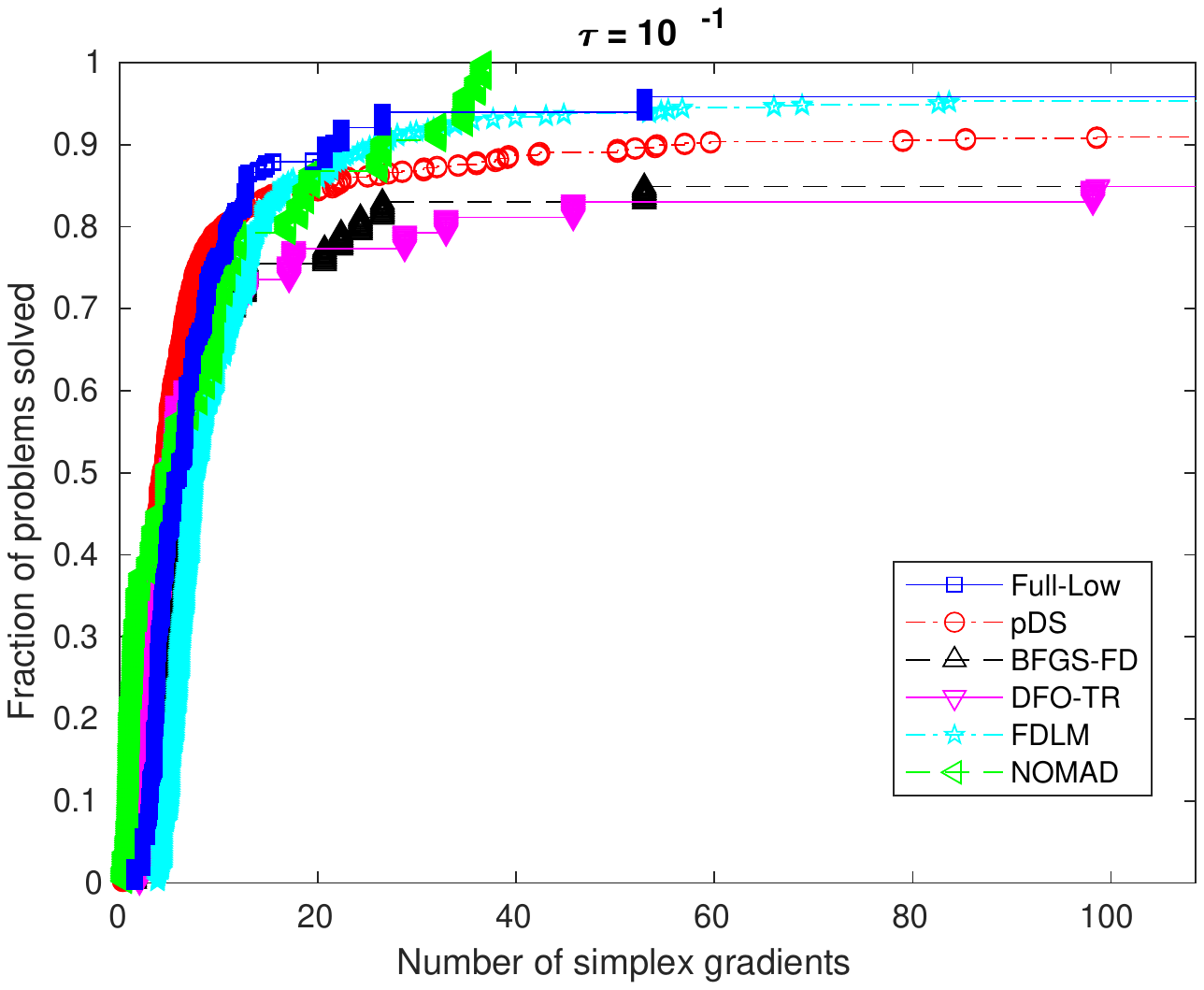}
\end{minipage}
 \hskip 5ex
\begin{minipage}[t]{0.47\textwidth}
  \includegraphics[scale=0.5]{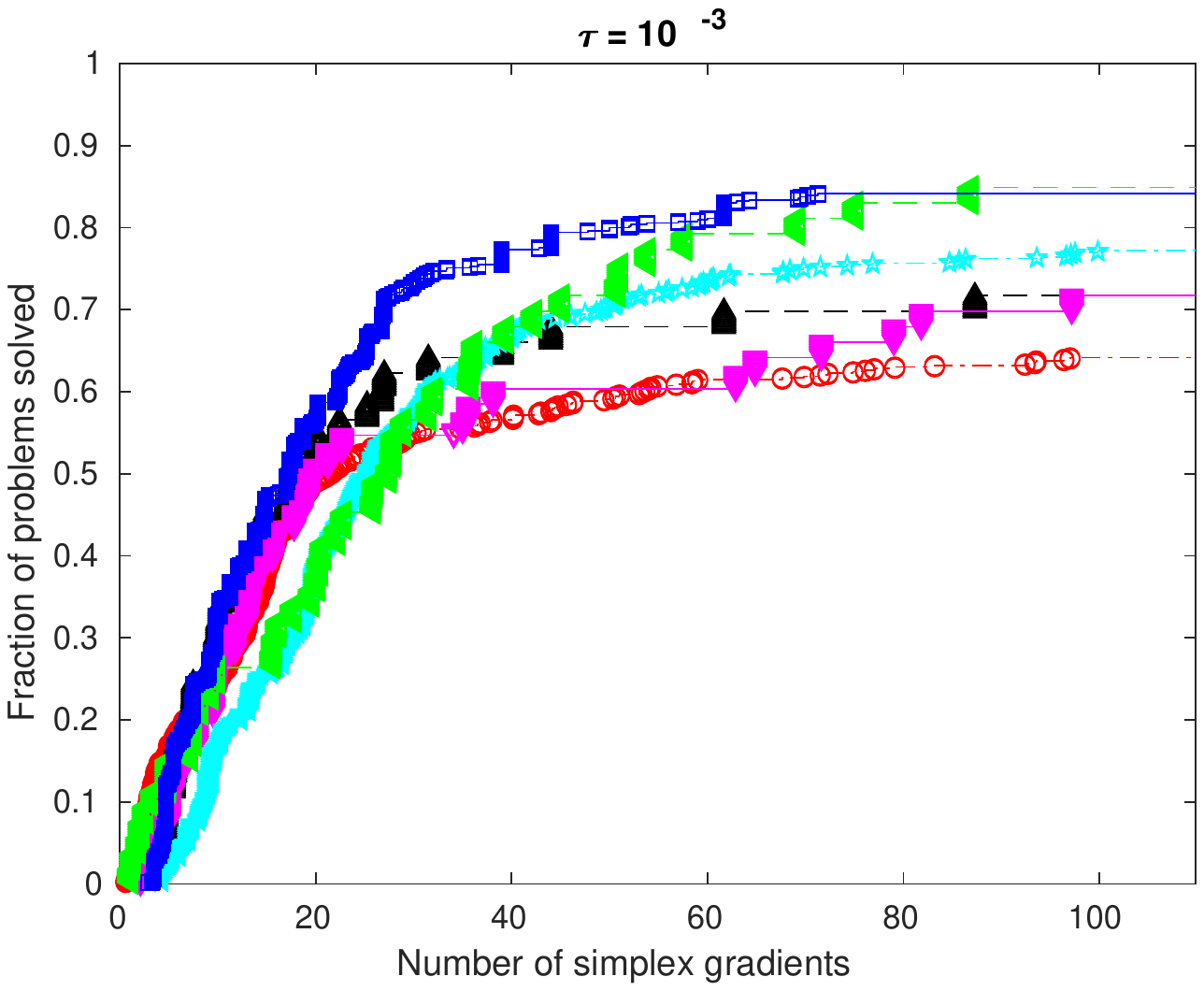}
\end{minipage}
\vskip -25ex
\caption{Performance profiles (first row) and data profiles (second row) with $\tau = 10^{-1}, 10^{-3}$ of the 6 solvers.  Results for the multiplicative deterministic noise problems in~\cite{JJMore_SMWild_2009}.}
\label{fig:comp4-determult}
\end{figure}

The results for the stochastic multiplicative noise are given in Figure~\ref{fig:comp4-noisymult}. The best solver is \NOMAD{} for both high and low accuracy. This time, the curve corresponding to \FLE{} is no longer above the \texttt{BFGS-FD} and \texttt{pDS}, instead it is in between them. This is because \texttt{BFGS-FD} performs poorly when~$h$ is equal to the square root of machine precision (and the noise is ``differentiated''). Note that \FDLM{} exhibits the second best robustness for low accuracy even though it relies mostly on BFGS.

\begin{figure}
\vskip -30ex
\hskip -25ex
\centering
\captionsetup{justification=centering}
\begin{minipage}[t]{0.47\textwidth}
  \includegraphics[scale=0.5]{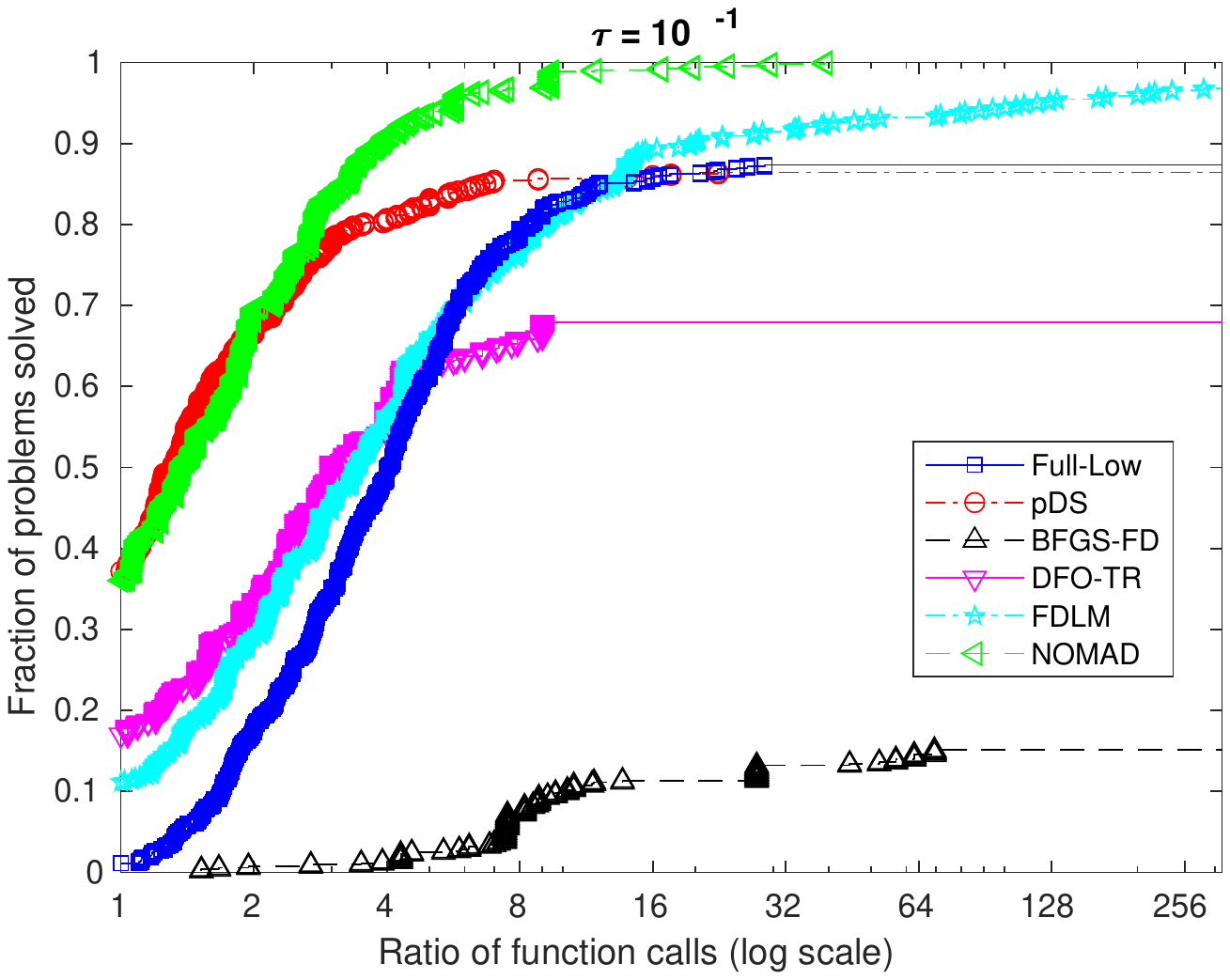}
\end{minipage}
 \hskip 5ex
\begin{minipage}[t]{0.47\textwidth}
  \includegraphics[scale=0.5]{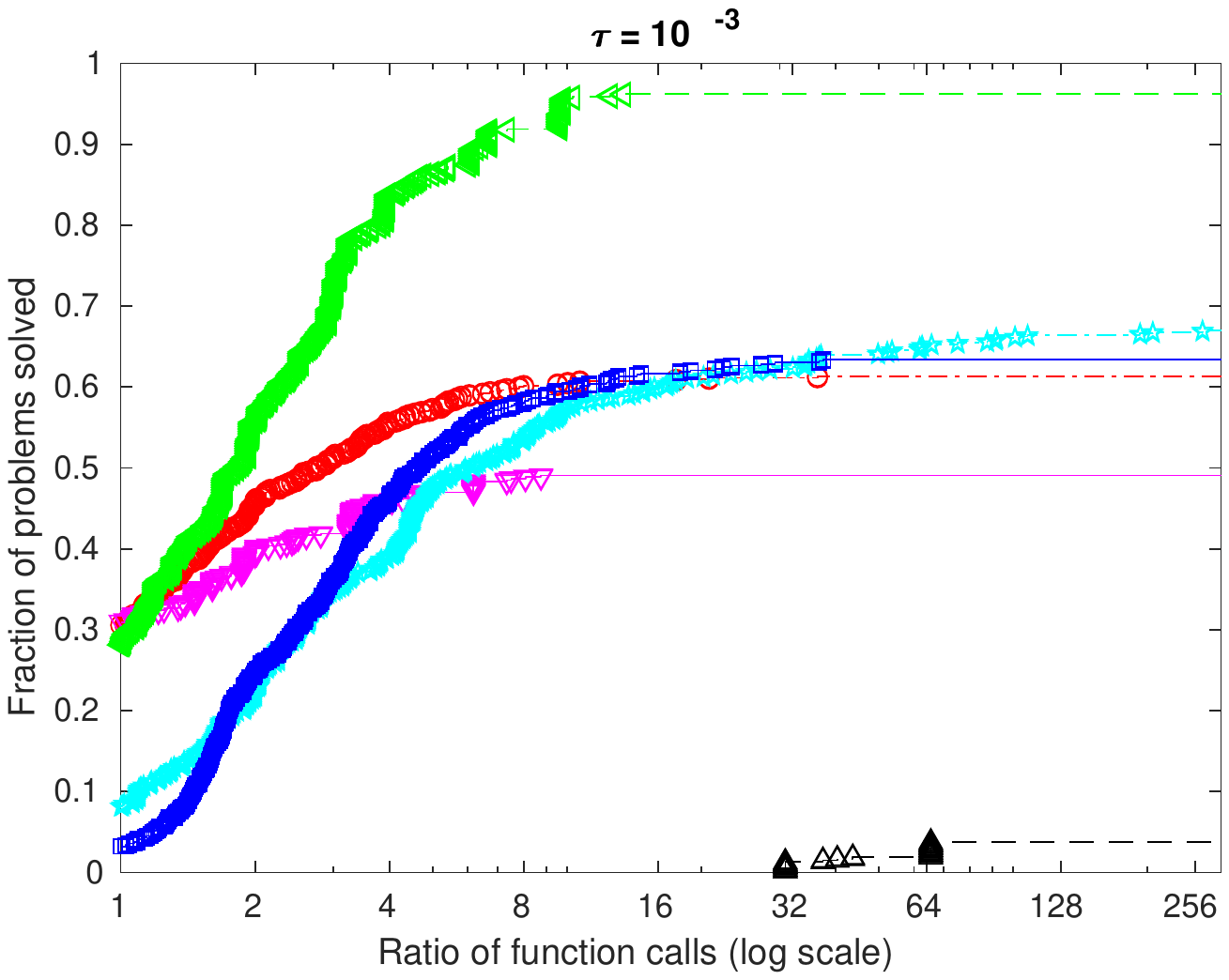}
\end{minipage}

\vskip -50ex
\hskip -25ex
\centering
\captionsetup{justification=centering}
\begin{minipage}[t]{0.47\textwidth}
  \includegraphics[scale=0.5]{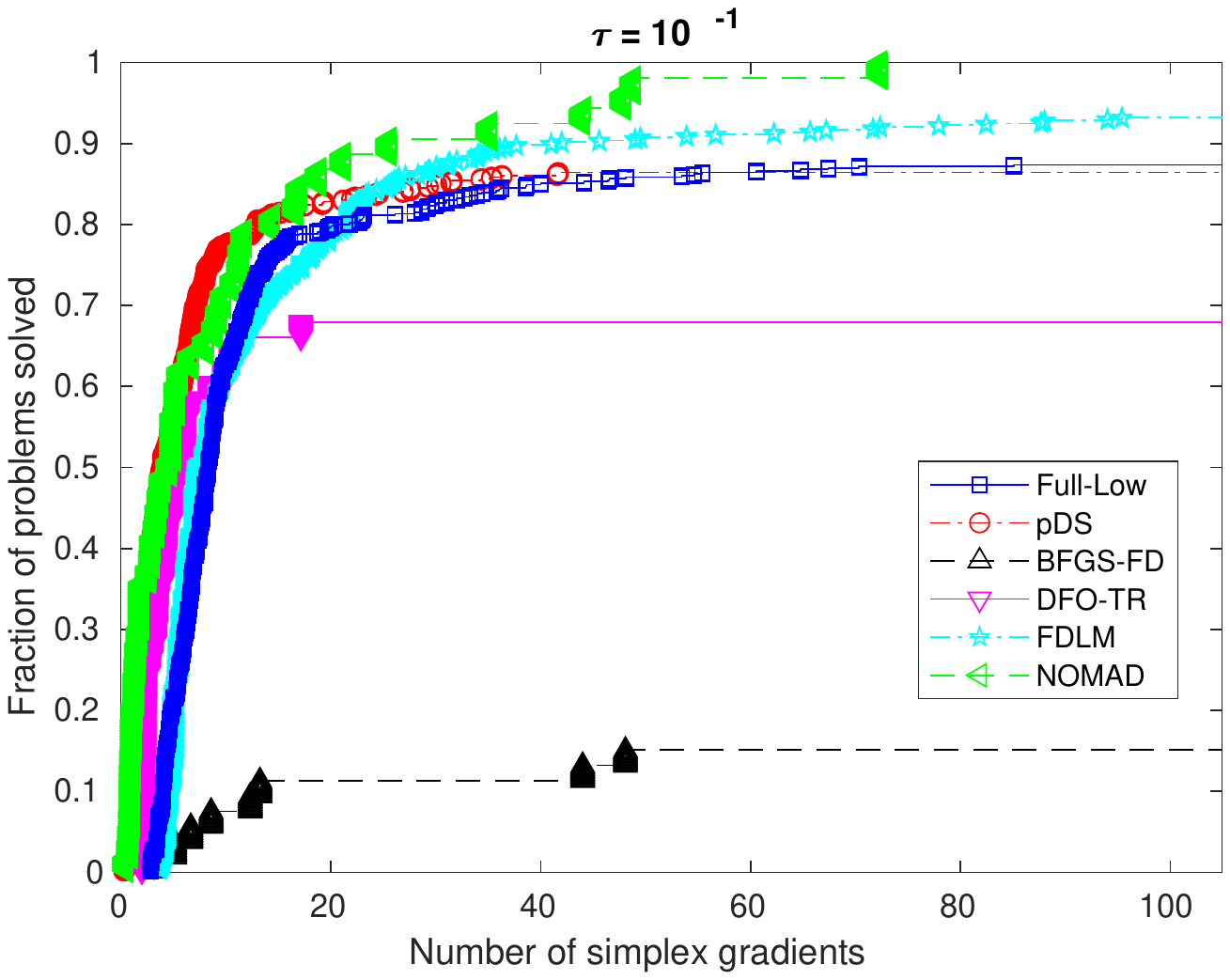}
\end{minipage}
 \hskip 5ex
\begin{minipage}[t]{0.47\textwidth}
  \includegraphics[scale=0.5]{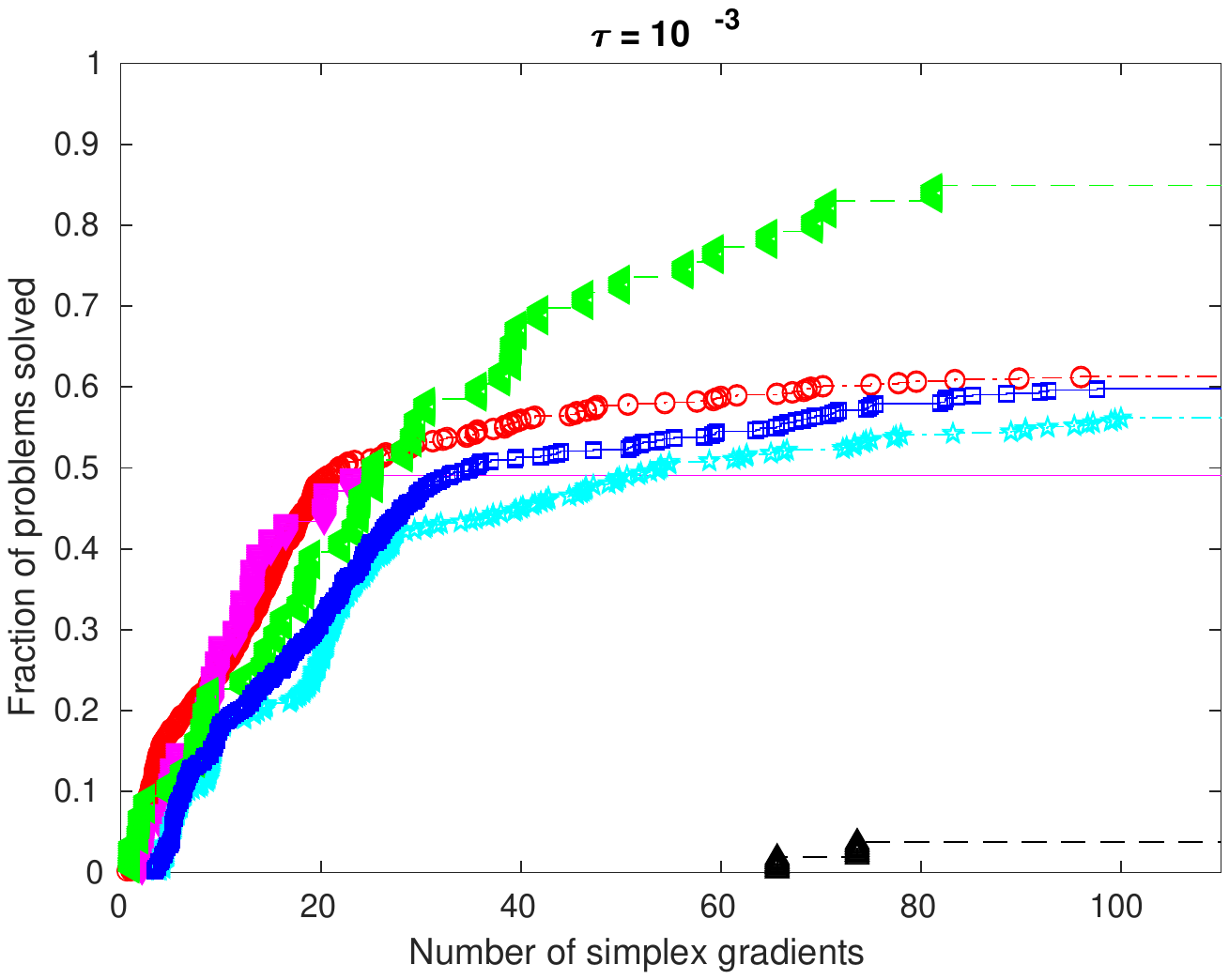}
\end{minipage}
\vskip -25ex
\caption{\centering Performance profiles (first row) and data profiles (second row) with $\tau = 10^{-1}$, $10^{-3}$ for the 6 solvers. Results for the multiplicative stochastic noise problems in~\cite{JJMore_SMWild_2009}.}

\label{fig:comp4-noisymult}
\end{figure}

Taking into consideration all problem types, $\FLE{}$ stands out as the best overall performer, when both efficiency and robustness are considered. For the noisy problems, we have also tried noise levels $\epsilon_f=10^{-2},10^{-4}$, but the relative positions of the curves in the profiles remain the same.

\section{Concluding remarks}
\label{sec:conclusions}

We introduced a new framework for unconstrained derivative-free optimization, consisting of the rigorous integration of two different methodologies. The goal was to combine the strengths of both methodologies in order to achieve solid numerical behavior (efficiency and robustness) regardless of the smoothness or noise regime of the objective function. The first methodology is related to the computation of {\it fully} linear models (in our case by computing finite difference gradients). The second is based on the {\it low} evaluation paradigm of probabilistic direct search. Our convergence analysis is novel in the way it connects the two methodologies, to rigorously extract their best properties.


The \FLE{} framework can be analyzed and implemented with other choices for the \FullEval{} and \LowEval{} iterations. For example, a trust-region step based on fully linear models \cite{ASBandeira_KScheinberg_LNVicente_2012} is a candidate for the \FullEval{} iterations. Other possibilities include line search methods that use simplex gradients \cite{CTKelley_2011} or deterministic direct search (with complete polling on a set of points defined by a PSS) \cite{TGKolda_RMLewis_VTorczon_2003}, instead of FD. Candidates for \LowEval{} include, for instance, randomized Gaussian smoothing methods (one-point~\cite{YNesterov_2011} or two-point approaches~\cite{JCDuchi_etal_2015}).


There are a number of future research items to be investigated, and two stand out naturally. The extension to the constrained case, in particular to bound and linear constraints, and the consideration of DFO problems with larger dimensions. We plan to report on these topics in future manuscripts.


\small

\bibliographystyle{plain}
\bibliography{ref-full-low-eval}

\end{document}